\def\R{\mathbb R}
\def\N{\mathbb N}
\def\H{\mathbb H}
\def\h2{\mathrm{area}}
\def\a {\left\lvert\Sigma \right\rvert}
\def\b_i{\left\lvert\Sigma_i\right\rvert}
\def\d{\mathrm{div}}
\newcommand{\fint}{\mathop{\int\makebox(-15,2){\rule[4pt]{.9em}{0.6pt}}\kern-4pt}\nolimits}
\def\tf {\mathring{A}}
\def\n{\lvert\mathring{A}\rvert}
\def\dr{\lvert\partial_r^{\top}\rvert}
\def\trr{\partial_r^{\top}}
\def\ds{\lvert\partial_s^{\top}\rvert}
\def\ts{\partial_s^{\top}}
\def\dw_i{\lvert\widehat{\nabla} w_t\rvert}
\def\tr{\mathrm{tr}_{g_0}}
\newtheorem{thm}{Theorem}[section]
\newtheorem{lemm}[thm]{Lemma}
\newtheorem{prop}[thm]{Proposition}
\theoremstyle{remark}
\theoremstyle{definition}
\newtheorem{defi}[thm]{Definition}
\title{Existence and Uniqueness of constant mean curvature foliation of asymptotically hyperbolic $3$-manifolds II}
\author{{Andr\'e Neves} ${}^{\dagger}$}
\email{aneves@math.princeton.edu}
\address{Fine Hall, Princeton University,
Princeton, NJ 08544, USA}
\author{Gang Tian}
\email{tian@math.princeton.edu}
\address{Fine Hall, Princeton University,
Princeton, NJ 08544, USA}
\thanks{\quad\ ${}^{\dagger}$\ The author was partially supported by NSF grant DMS-06-04164.}
\begin{document}

\maketitle \markboth{Existence and uniqueness of constant mean curvature foliations} { Andr\'e Neves and Gang
Tian} \maketitle

\begin{abstract}
In a previous paper, the authors showed that metrics which are asymptotic to  Anti-de Sitter-Schwarzschild metrics with positive mass admit a unique foliation by stable spheres with constant mean curvature. In this paper we extend that result to all asymptotically  hyperbolic metrics for which the trace of the mass term is positive. We do this by combining the Kazdan-Warner obstructions with a theorem due to De Lellis and M\"uller.
\end{abstract}

\section{Introduction}

In a previous paper \cite{neves}, the authors showed that metrics which are asymptotic to  Anti-de Sitter-Schwarzschild metrics with positive mass admit a unique foliation by stable spheres with constant mean curvature. This metrics are a special case of asymptotically hyperbolic metrics and they arise naturally as initial conditions for the Einstein equations with a negative cosmological constant.  This result was motivated by an earlier work of Huisken and Yau \cite{Huisken3} where they studied a similar question for asymptotically flat metrics, more precisely, metrics which are asymptotic to  a Schwarzschild with positive mass.

Because being asymptotic to  Anti-de Sitter-Schwarzschild metric is a very restrictive condition, the purpose of this paper is to extend the results of \cite{neves} to a more general class of asymptotically hyperbolic metrics. Some estimates of \cite{neves} rely on the fact that the ambient metric is very close to being  Anti-de Sitter-Schwarzschild and so some new arguments are needed. 

Fortunately, a similar question was considered by Jan Metzger in \cite{metzger} where he extended the results of Huisken and Yau not only to admit foliations with prescribed mean curvature but also to admit  metrics  which are small perturbations of  Schwarzschild metrics where the perturbation term has the same order has the mass term. We note that some of the estimates done by Huisken and Yau do not apply to the setting considered by Metzger and so some new ideas were needed. Most notably, he makes a very nice observation regarding Simon's identity \cite[Identity (2.3)]{metzger} and  then uses quite effectively \cite[Section 4]{metzger} a theorem due to De Lellis and M\"uller \cite{DeLellis}. This allows him to derive optimal apriori estimates  for stable constant mean curvature spheres. Finally, he uses a continuity method to deform a coordinate sphere in Schwarzschild space into a stable constant mean curvature sphere for the metric he is considering, where the apriori estimates he derived assure him that the spheres cannot drift, i..e, their centers do not rush off to infinity.

Inspired by \cite{metzger}, we will use a very sharp estimate due to De Lellis and M\"uller \cite{DeLellis} to show that  stable spheres in asymptotically hyperbolic $3$-manifolds are very close to coordinate spheres for some coordinate system. The main part of the argument will be to show that these coordinate systems have to approximate a {\em predetermined} coordinate system, i.e., no drifting occurs. This will be accomplished using the Kazdan-Warner obstructions \cite{Kazdan}. We note that it is in this step that the positivity of the mass must be used because such phenomena does not occur in hyperbolic space. We then apply a continuity argument to prove existence and uniqueness.

We end up  the introduction with the following remark. The work done here simplifies somewhat the previous work of the authors but does not fully generalizes it for two reasons. Firstly, in this paper, the foliations considered have the property that the difference between the outer radius and the inner radius is uniformly bounded, while in \cite{neves} we allow for foliations more general than that. Secondly,  the continuity method uses the fact that stable constant mean curvature spheres in Anti-de Sitter-Schwarzschild are unique. This was proven in \cite{neves} and requires more refined estimates that the ones we use in this paper.

In Section \ref{sec2}, we give some definitions and state the main result. In Section \ref{prelim}, Section \ref{intestimate}, and Section \ref{Intrin}, we adapt the work done in \cite{neves} to our new setting. Most of the proofs will be obvious modifications of the work done in \cite{neves}. In Section \ref{ap} we use De Lellis and M\"uller theorem in order to show that stable constant mean curvature spheres are very close to some coordinate spheres. In Section \ref{Unique}, we use the Kazdan-Warner obstructions to show that the stable constant mean curvature spheres have to approximate coordinate spheres for a {\em fixed} coordinate system. Finally, in Section \ref{U} we use a continuity method to prove existence and uniqueness of foliations by stable spheres with constant mean curvature.

\section{Definitions and statement of main theorem}\label{sec2}

\subsection{Definitions} 

Given a complete noncompact Riemannian $3$-manifold $(M,g)$, we denote its connection by $D$, the Ricci curvature by $
Rc$, and the scalar curvature by $R$.  The induced connection on a surface $\Sigma\subset M$ is denoted by $\nabla$, the exterior unit normal by $\nu$ (whenever its defined), the mean curvature by $H$, and the surface area by  $|\Sigma|$.

In what follows $g_0$ denotes the standard metric on $S^2$.

\begin{defi}\label{defi}
A complete noncompact Riemannian $3$-manifold $(M,g)$ is said to be asymptotically hyperbolic if the following are true:
\begin{itemize}
\item[(i)] There is a compact set $K\subset\subset M$  such that $M\setminus K$ is diffeomorphic to $\R^3\setminus B_{r_1}(0)$.

\item [(ii)] With respect to the spherical coordinates induced by the above diffeomorphism, the metric can be written as
$$g=dr^2+\sinh^2r \,g_0+h/(3\sinh r) + Q$$
where $h$ is a symmetric $2$-tensor on $S^2$ and 
$$\lvert Q\rvert+\lvert D Q\rvert +\lvert D^2 Q\rvert+\lvert D^3 Q\rvert\leq C_1\exp(-4r)$$
for some constant $C_1$.
\end{itemize}
\end{defi}
   The above definition is stated differently from the one given in \cite{wang}  (see also \cite{herzlich}). Nonetheless, using a simple substitution of variable $$t=\ln\left(\frac{\sinh(r/2)}{\cosh(r/2)}\right),$$ they can be  seen to be equivalent.
 
 Note that a given coordinate system on $M\setminus K$ induces a radial function $r(x)$ on $M\setminus K$. With respect to this coordinate system, we define the {\em inner radius} and {\em outer radius} of a surface $\Sigma\subset M\setminus K$ to be
 $$\underline{r}=\sup\{r\,|\, B_r(0)\subset \Sigma\}\quad\mbox{and}\quad\overline{r}=\inf\{r\,|\, \Sigma\subset B_r(0)\}$$
respectively. Furthermore, we denote the coordinate spheres induced by a coordinate system by
$$\{|x|=r\}:=\{x\in M\setminus K\,\,|\,\,r(x)=r\}$$
and the radial vector by $\partial_r$. Moreover, $\partial_r^{\top}$ stands for the tangential projection of $\partial_r$
on $T\Sigma$, which has length denoted by $\dr$. If $\gamma$ is an isometry of $\H^3$, the radial function $s(x)$ induced by this new coordinate system is such that
$$|s(x)-r(x)|\leq C\quad\mbox{for all x }\in M\setminus K,$$
where $C$ depends only on the distance from $\gamma$ to the identity. We denote  by $\partial_s$, $\partial_s^{\top}$, and $|\partial_s^{\top}|$ the correspondent quantities defined with respect to this new coordinate system. 

With respect to the coordinate system induced by $\gamma$, the metric $g$ can be written as
 $$g=ds^2+\sinh^2s \,g_0+h^{\gamma}/(3\sinh s) + P,$$
 where $h^{\gamma}$ is a symmetric $2$-tensor on $S^2$ and 
$$\lvert P\rvert+\lvert D P\rvert +\lvert D^2 P\rvert+\lvert D^3 P\rvert\leq C\exp(-4r)$$
for some constant $C$ depending only on $C_1$ and the distance from $\gamma$ to the identity. If $v$ is such that
$$\gamma^{*}g_0=\exp(2v)g_0,$$
 the relation between $h$ and $h^{\gamma}$ is given by
$$h^{\gamma}=\exp(v)\gamma^* h\quad\mbox{and}\quad \tr h^{\gamma}=\exp(3v)\tr h\circ \gamma.$$

A standard application of Brower's fixed point Theorem 
implies the existence of a conformal diffeomorphism  $\gamma$  such that, for $i=1,2,3,$
\begin{equation*}
 \int_{S^2}x_i\tr h^{\gamma} d\mu_0=0,
\end{equation*}
where $x_i$ denote the coordinate functions for the unit ball in $\R^3$.
Moreover, if $\tr h$ is positive, the computations done in \cite[page 292]{wang} show that such $\gamma$ is unique. 

Finally, 
a surface $\Sigma$ with constant mean curvature is said to be stable if volume preserving variations do not
decrease its area. A standard computation shows that stability is equivalent to the second variation operator
$$ Pf=-\Delta f -\left(\lvert A\rvert^2+R(\nu,\nu) \right)f$$
having only nonnegative eigenvalues when restricted to functions with zero mean value, i.e.,
     $$\int_{\Sigma_t}\left(\lvert A\rvert^2+R(\nu,\nu) \right)f^2d\mu\leq \int_{\Sigma_t}\lvert \nabla f \rvert^2 d\mu$$
for all functions $f$ with $\int_{\Sigma_t}fd\mu=0.$

\subsection{Statement of main result}

From now on, we  assume that $\tr h$ is positive and so we will use the radial function $r(x)$ and all of its associated quantities  to denote the unique coordinate system satisfying  
\begin{equation}\label{centered}
 \int_{S^2}x_i\tr h d\mu_0=0\quad\mbox{for } i=1,2,3.
\end{equation}

We say that an asymptotic hyperbolic $3$-manifold satisfies hypothesis (H) if we can find positive constants $r_1, C_1,C_2,$ and $C_3$ such that, with respect to the coordinate system satisfying \eqref{centered},

$$(H)\qquad\left \{ \begin{aligned}
					&\mbox{$(M,g)$ is asymptotically hyperbolic with constants }r_1\mbox{ and }C_1,\\
					& |h|_{C^3(S^2)}\leq C_2,\\
					& \tr h\geq C_3.
				\end{aligned}
\right.$$

The main purpose of this paper is to show
 \begin{thm} Let $(M,g)$ be an asymptotically hyperbolic manifold satisfying hypothesis (H). Outside a compact set, $M$   admits a foliation by stable spheres with constant mean curvature. The foliation is unique among those with the property that, for some constant $C_4$, each leaf has
  $$\overline r-\underline r\leq C_4.$$
  
  Furthermore, there are  constants $$C=C(C_1,C_2,C_3,C_4, r_1)\quad\mbox{and}\quad r_0=r_0(C_1,C_2,C_3,C_4, r_1)$$ such that each leaf $\Sigma$ with $\underline r\geq r_0$ satisfies the following:
  \begin{itemize}
  \item[(i)] If we set $$w(x)=r(x)-\hat r\quad\mbox{where}\quad|\Sigma|=4\pi\sinh^2 \hat r,$$
then
$$\sup_{\Sigma} |w|\leq C\exp(-\underline r)\quad\mbox{and}\quad\int_{\Sigma}| \trr|^2d\mu\leq C\exp(-2\underline r);$$
  \item[(ii)] $$\int_{\Sigma}\n^2d\mu\leq C\exp(-4\underline{r}); $$
  \item[(iii)] $\Sigma$ can be written as $$\Sigma=\{(\hat r+f(\theta),\theta)\,|\, \theta\in S^2\}\quad\mbox{with}\quad |f|_{C^2(S^2)}\leq C.$$
  \end{itemize}
 \end{thm}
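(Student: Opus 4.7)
The plan is to first establish the a priori estimates (i)--(iii) for every stable CMC sphere $\Sigma$ with $\overline r - \underline r \leq C_4$ and $\underline r$ sufficiently large, and then to deduce existence and uniqueness of the foliation via a continuity method in which these estimates rule out any drift to infinity. The first and most basic step, carried out in Sections~\ref{intestimate}--\ref{Intrin}, is a sharp $L^2$ bound on the traceless second fundamental form. Feeding an appropriate zero-mean test function into the stability inequality, using that $\overline r - \underline r \leq C_4$ controls the oscillation of $r$ on $\Sigma$, and using the expansion of $R(\nu,\nu)$ coming from Definition~\ref{defi}, one obtains a preliminary bound $\int_\Sigma \n^2 d\mu \lesssim \exp(-2\underline r)$. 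Following Metzger, this is then improved to the optimal $\int_\Sigma \n^2 d\mu \leq C\exp(-4\underline r)$ of (ii) by applying Simon's identity to $\n^2$ in the reformulation suited to stable CMC surfaces; this replaces, in the general asymptotically hyperbolic setting, the closeness-to-Anti-de-Sitter--Schwarzschild arguments used in \cite{neves}.

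With this $L^2$ control in hand, Section~\ref{ap} uses the De Lellis--M\"uller theorem to upgrade it to a $C^2$ graphical description of $\Sigma$. After conformally rescaling the ambient metric by $\sinh^{-2} r$ so that the asymptotic chart becomes asymptotically Euclidean, the bound on $\int_\Sigma \n^2 d\mu$ is exactly the almost-umbilicity hypothesis of De Lellis--M\"uller, whose conclusion, translated back to the original picture, is that $\Sigma$ is a small $C^2$ graph over some coordinate sphere $\{s=\hat s\}$ in the system $(s,\theta)$ associated to some isometry $\gamma=\gamma_\Sigma$ of $\H^3$. At this intermediate stage the statements in (ii) and (iii) hold with $r$ replaced by $s$, but a priori $\gamma_\Sigma$ is only known to stay within a bounded distance of the identity, and the drifted coordinate $s$ need not agree with the predetermined $r$.

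The principal obstacle is then to show that $\gamma_\Sigma \to \mathrm{Id}$ as $\underline r \to \infty$; this is the Kazdan--Warner argument in Section~\ref{Unique} and is the only place where the positivity of $\tr h$ enters. Let $X_i$ denote the conformal Killing field on $\H^3$ whose boundary value is the $i$-th coordinate function $x_i$ on $S^2$. Integrating the identity $H\equiv\mathrm{const}$ against $\langle X_i,\nu\rangle$ on $\Sigma$, expanding in the $\gamma_\Sigma$-coordinates via
$$g=ds^2+\sinh^2 s\, g_0+h^{\gamma_\Sigma}/(3\sinh s)+P,$$
and substituting the graph description from the previous step, the leading nontrivial contribution reduces to a multiple of $\int_{S^2} x_i \tr h^{\gamma_\Sigma} d\mu_0$. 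Since the starting identity comes from the constancy of $H$ and the conformal Killing property of $X_i$, this integral is forced to zero in the limit. Combined with the normalization~\eqref{centered} satisfied by the predetermined coordinates and the positivity $\tr h \geq C_3$, which makes $\gamma \mapsto (\int_{S^2} x_i \tr h^\gamma d\mu_0)_{i=1,2,3}$ a local diffeomorphism at the identity (as in the computation of \cite{wang} recalled above), this pins $\gamma_\Sigma$ down with the required rate, so rewriting the conclusions of Step 2 back in the predetermined coordinates yields (i)--(iii).

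Finally, Section~\ref{U} derives existence and uniqueness by a continuity method: connect $g$ to an Anti-de Sitter--Schwarzschild reference $g_m$ through a one-parameter family of metrics satisfying hypothesis~(H) uniformly; at the endpoint given by \cite{neves} a unique stable CMC foliation exists; openness along the path follows from the implicit function theorem applied to the nondegenerate stability operator on each leaf; closedness, as well as uniqueness among foliations with $\overline r - \underline r \leq C_4$, are provided by the a priori estimates (i)--(iii), which prevent the leaves from drifting off to infinity or from colliding as the metric is varied.
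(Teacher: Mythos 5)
Your proposal captures the correct overall structure — sharp $L^2$ control of $\mathring A$ via Metzger's reformulation of Simons' identity, De Lellis--M\"uller to get coordinate-sphere approximation in some drifted system $\gamma_\Sigma$, a Kazdan--Warner obstruction to rule out drift, and a continuity method back to Anti-de Sitter--Schwarzschild. The first two steps and the general shape of the continuity method match the paper's Sections~\ref{intestimate}--\ref{ap} and~\ref{U}.

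The Kazdan--Warner step, however, is carried out by a genuinely different mechanism than what you describe, and your version as stated needs more care. You propose an \emph{extrinsic} balance law: integrate $H\equiv\text{const}$ against $\langle X_i,\nu\rangle$ for ambient conformal Killing fields $X_i$ of $\H^3$. But for an exact Killing field $X$ of the background one gets $\int_\Sigma H\langle X,\nu\rangle\,d\mu_{\bar g}=0$ trivially (it is a divergence), so the nontrivial content must come entirely from expanding $d\mu_g$, $\nu_g$ and $H_g$ in terms of $h$ and $Q$, and you have not verified that the leading error term is $\int_{S^2}x_i\,\tr h^{\gamma}\,d\mu_0$ (rather than some other moment of $h$, or a term at the wrong order). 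The paper instead applies the Kazdan--Warner identity \emph{intrinsically}, to the Gaussian curvature $\widehat K$ of the conformally normalized induced metric $\hat g=\exp(2\beta)g_0$ from Theorem~\ref{intrinsic}: starting from $\int_{S^2}\langle\nabla\widehat K,\nabla x_i\rangle\exp(2\beta)\,d\mu_0=0$ and feeding in the Gauss equation expansion of $\widehat K$ (Lemma~\ref{ads}(v)), the constant term $\a(H^2-4)/4$ drops out against the centering condition~\eqref{c1}, and what survives at leading order is exactly $\int_{S^2}x_i\,\tr h^{\gamma}\,d\mu_0$. This intrinsic route is self-contained and avoids the unverified reduction in your argument.

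Two smaller points on the continuity method. Uniqueness is not deduced directly from the a~priori estimates: in the paper one runs the continuity argument in reverse, deforming a hypothetical pair of distinct CMC spheres for $g^1$ back to $g^0$, where they would contradict the AdS--Schwarzschild uniqueness proven in~\cite{neves}. And showing the family $(\Sigma^l)$ is actually a \emph{foliation} (leaves disjoint) requires the additional sign argument at the end of Section~\ref{U}, where the lapse $\phi=\langle V_l,\nu\rangle$ is shown not to change sign via the quantitative invertibility estimate of Proposition~\ref{invert} together with equation~\eqref{barro}; this step does not appear in your outline.
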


Throughout the rest of this paper we will be using the following notation.  $\Sigma$ will always  denote a stable sphere with constant mean curvature in $(M,g)$ for which
$$\overline r-\underline r\leq C_4.$$
We say that a geometric quantity defined on $\Sigma$ is $T=O(\exp(-nr))$ whenever we can find a constant $C=C(C_1,C_2, C_3, C_4, r_1)$ for which
$$|T|\leq C\exp(-nr).$$

\section{Preliminaries}\label{prelim}

In this section  we adapt the formulas derived in \cite{neves} to the class of asymptotically hyperbolic $3$-manifolds considered in this paper.  $\gamma$ is an isometry of $\H^3$ with induced radial function $s(x)$ and  such that
$$\mbox{dist}(\gamma,\mbox{Id})\leq C_0.$$ 
We denote by $T=O_0(\exp(-nr))$ any quantity defined on $\Sigma$ for which we can find a constant $C=C(C_0,C_1,C_2, C_3, C_4, r_1)$ such that
$$|T|\leq C\exp(-nr).$$
The hyperbolic metric is denoted by $\bar g$ and, with respect to the spherical coordinates $(s,\theta),$ we denote by $e_{\theta}$ any tangent vector to the coordinate spheres with hyperbolic norm one.
\begin{lemm}\label{ads}
$ $
    \begin{enumerate}
        \item[(i)] The mean curvature $H(s)$ of 
        $$\{x\in M\setminus K\,\,|\,\, s(x)=s\}$$ satisfies
            $$H(s)=2\frac{\cosh s}{\sinh s}-\frac{\tr h^{\gamma}}{2\sinh^3 s}+O_0(\exp(-4r)).$$
        \item[(ii)] The Ricci curvature satisfies
            \begin{align*}
                Rc(\partial_s,\partial_s)&=-2-\frac{\tr h^{\gamma}}{2\sinh^3 s}+O_0(\exp(-4r)),\\
                Rc(e_{\theta},e'_{\theta})&=-2\bar g(e_{\theta},e'_{\theta})+O_0(\exp(-3r)),\\
                Rc(e_{\theta},\partial_s)&=O_0(\exp(-4r)).
            \end{align*}
     \item[(iii)]  The scalar curvature satisfies $R(g)=-6+O(\exp(-4r))$.
     \item[(iv)] $$|Rc(\partial_s,\cdot )^{\top}|\leq |\ts|O_0(\exp(-3r))+O_0(\exp(-4r))$$
     and 
     $$Rc(\nu,\nu)=Rc(\partial_s,\partial_s)+|\ts|^2O_0(\exp(-3r))+O_0(\exp(-4r)).$$
     \item[(v)] The Gaussian curvature of $\Sigma$ satisfies
    \begin{multline*}
        K=\frac{H^2-4}{4}+\frac{\tr h^{\gamma}}{2\sinh^3 s}
        +|\ts|^2O_0(\exp(-3r))-  \frac{\n}{2}+O_0(\exp(-4r)).
  \end{multline*}
	\end{enumerate}
\end{lemm}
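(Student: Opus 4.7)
The strategy is to treat the hyperbolic metric $\bar g = ds^2 + \sinh^2 s\, g_0$ as the background, whose coordinate spheres have mean curvature $2\coth s$, Ricci tensor $-2\bar g$, and scalar curvature $-6$, and to expand each quantity as a perturbation driven by $\eta := h^\gamma/(3\sinh s) + P$. A key structural point is that the leading piece $h^\gamma/(3\sinh s)$ is a pure $S^2$-tensor carrying no $\partial_s$-legs, and this is the source of several cancellations below.

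For (i) I would compute $H(s) = \tfrac{1}{2} g^{ij}\partial_s g_{ij}$ with $i,j$ tangential to the coordinate sphere: the $\bar g$-piece yields $2\coth s$, differentiating $h^\gamma/(3\sinh s)$ in $s$ and tracing against $\sinh^2 s\, g_0$ produces $-\tr h^\gamma/(2\sinh^3 s)$, and $P$ together with its first derivative is absorbed into $O_0(e^{-4r})$. For (ii) I would linearize the Ricci operator about $\bar g$, writing $Rc(g) = -2\bar g + L_{\bar g}(\eta) + O(|\eta|_{C^2}^2)$; because $h^\gamma$ has no $\partial_s$-legs, the $e^{-3r}$-order contribution to the mixed component $Rc(\partial_s, e_\theta)$ vanishes and that component is controlled purely by $P$, while $Rc(\partial_s,\partial_s)$ and $Rc(e_\theta,e_\theta')$ inherit the displayed corrections. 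For (iii) I would trace (ii): the $e^{-3r}$-order contributions from the radial and spherical blocks cancel, since the normalization $1/3$ is precisely the one for which the linearized scalar curvature falls off at the faster rate $e^{-4r}$, and the remainder can be written as $O(e^{-4r})$ rather than $O_0(e^{-4r})$ because $R$ is a geometric invariant independent of the chart.

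For (iv) I would decompose $\partial_s = \partial_s^\top + \alpha\nu$ with $\alpha = g(\partial_s,\nu)$, so that $\alpha^2 = 1 - |\partial_s^\top|^2 + O_0(e^{-4r})$. The first estimate follows from the bilinear expansion $Rc(\partial_s, X) = Rc(\partial_s^\top, X) + \alpha\, Rc(\nu, X)$ for $X \in T\Sigma$ and a componentwise application of (ii). For the second identity I would substitute $\nu = \alpha^{-1}(\partial_s - \partial_s^\top)$ into
\begin{equation*}
Rc(\nu,\nu) = \alpha^{-2}\bigl[Rc(\partial_s,\partial_s) - 2\,Rc(\partial_s, \partial_s^\top) + Rc(\partial_s^\top, \partial_s^\top)\bigr]
\end{equation*}
and verify that the $-2|\partial_s^\top|^2$ contribution produced by $(\alpha^{-2}-1)Rc(\partial_s,\partial_s)$ is cancelled exactly by the $+2|\partial_s^\top|^2$ coming from the $-2g$ background in the mixed and pure tangential terms, leaving only the stated error $|\partial_s^\top|^2 O_0(e^{-3r}) + O_0(e^{-4r})$. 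For (v) I would apply the three-dimensional Gauss equation
\begin{equation*}
K = \tfrac{R}{2} - Rc(\nu,\nu) + \det A
\end{equation*}
together with the two-dimensional identity $\det A = H^2/4 - |\tf|^2/2$, and substitute (iii) and the expansion of $Rc(\nu,\nu)$ coming from (ii) and (iv).

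The main technical obstacle is the cancellation at the heart of (iii): one must verify that the traced linearization of the Ricci operator applied to $h^\gamma/(3\sinh s)$ vanishes at order $e^{-3r}$, so that $R(g) = -6 + O(e^{-4r})$ rather than $R(g) = -6 + O(e^{-3r})$. This is a radial computation involving $\Delta_{\bar g}(1/\sinh^3 s)$ and the Lichnerowicz-type structure of the linearization, and it is precisely this coefficient $1/3$ in the metric expansion which is selected for this reason. Once it is in hand, the remaining parts follow by the mechanical substitutions outlined above.
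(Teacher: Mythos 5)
Your overall strategy matches the paper's: compute the ambient geometry of the coordinate spheres, pass to $\Sigma$ by decomposing vectors into $\partial_s$-components and coordinate-sphere-tangent components, and finish with the Gauss equation for (v). The chief computational difference is in (ii)--(iii): the paper works out the Christoffel symbols of $g$ directly (in normal coordinates for $g_0$ diagonalizing $h^\gamma$, with $Q$ discarded from the outset since it contributes only $O_0(\exp(-4r))$), whereas you propose to linearize the Ricci and scalar curvature operators about $\bar g$. Both are valid; the linearization route makes the cancellation underlying (iii) conceptually transparent, at the price of controlling the quadratic remainder $O(|\eta|_{C^2}^2)=O(\exp(-6r))$. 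For (iv) you decompose $\partial_s$ along $T\Sigma\oplus N\Sigma$ while the paper decomposes $\nu$ and the test vector $X$ into $\partial_s$-components and coordinate-sphere-tangent parts; these are equivalent, and the cancellation you identify for $Rc(\nu,\nu)$ --- between $(\alpha^{-2}-1)Rc(\partial_s,\partial_s)$ and the $-2g$ contribution from the mixed and tangential terms --- is indeed the mechanism, and checks out.

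One aside is incorrect and worth flagging. You attribute the $e^{-4r}$ decay in (iii) to the particular coefficient $1/3$ in $h/(3\sinh s)$, but this cannot be the reason: the linearized scalar curvature is linear in the perturbation, so if the $e^{-3r}$-order contribution to $R$ vanished for the coefficient $1/3$, it would vanish for any constant multiple of $h$. What actually makes (iii) work is the algebraic structure of the perturbation --- it has no $ds^2$ or $ds\,d\theta$ components and the radial profile $1/\sinh^3 s$ --- for which the combination $-\Delta_{\bar g}(\mathrm{tr}_{\bar g}\eta)+\delta_{\bar g}\delta_{\bar g}\eta-\langle Rc(\bar g),\eta\rangle_{\bar g}$ happens to vanish to order $e^{-3r}$. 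The $1/3$ is merely a normalization convention for $h$. This does not affect the validity of your computation, only the explanation you give for it.
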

\begin{proof}

We index the coordinates  $(s,\theta_1,\theta_2)$ by $0, 1$, and $2$ and we assume that 
$(\theta_1,\theta_2)$ are normal coordinates for the metric $g_0$ and that $\partial_{\theta_1}, \partial_{\theta_2}$ are eigenfunctions for $h^\gamma$. We also assume, without loss of generality, that 
\begin{equation}\label{met}
	g=ds^2+\sinh^2 s\,  g_0+h^\gamma/(3\sinh s).
\end{equation}

Then
$$H(s)=-g^{11}\Gamma^r_{11}-g^{22}\Gamma^r_{22}=2\frac{\cosh s}{\sinh s}-\frac{\tr h^{\gamma}}{2\sinh^3 s}+O_0(\exp(-4r)).$$

With respect to these coordinates 
$$Rc(\partial_s,\partial_s)=R^j_{jss}=\partial_j \Gamma_{ss}^j-\partial_s \Gamma_{js}^j+\Gamma_{ss}^m\Gamma_{jm}^j-
\Gamma^{m}_{js}\Gamma^{j}_{sm}.$$ 
We have that
$$\Gamma_{ss}^j=0,\quad\mbox\quad\Gamma_{s1}^2=0,$$
and thus
\begin{multline*}\Gamma_{ss}^m\Gamma_{jm}^j-
\Gamma^{m}_{js}\Gamma^{j}_{sm}=-\left(\Gamma_{1s}^1\right)^2-\left(\Gamma_{2s}^2\right)^2=-\sum_{i=1}^2\left(\frac{\partial_s g_{ii}}{2g_{ii}}\right)^2\\
=-\sum_{i=1}^2\left(\frac{\cosh r}{\sinh r}-\frac{h^{\gamma}_{ii}}{2\sinh^3 r}+O_0(\exp(-4r))\right)^2.
\end{multline*}
On the other hand,
\begin{multline*}
\partial_j \Gamma_{ss}^j-\partial_s \Gamma_{js}^j=-\sum_{i=1}^2\partial_s \Gamma_{is}^i\\
=-2-\frac{2\cosh^2 s}{\sinh^2 s}+\frac{\tr h^{\gamma}}{2\sinh s}+\sum_{i=1}^2 2\left(\Gamma_{is}^i\right)^2+O_0(\exp(-4r))
\end{multline*}
and this implies the first identity of (ii). The other two identities and property (iii) can be checked in the same way.

We now prove the first identity in (iv). Decompose the vector $\nu$ as
$$\nu=a\partial_s+\bar \nu$$ and a tangent vector $X$ as
$$X=b\partial_s+\bar X,$$ where both $\bar \nu$ and $\bar X$ are tangent to coordinate spheres. Hence, due to \eqref{met},
$$1=a^2+|\bar \nu|^2,\quad |X|^2=b^2+|\bar X|^2$$
and 
$$|\bar \nu|^2=|\ts|^2,\quad b^2\leq |X|^2|\ts|^2.$$
Therefore, because
$$0=ab +\bar g(\bar \nu,\bar X)+|\bar X||\bar \nu|O_0(\exp(-3r)),$$
we obtain
\begin{align*}
|Rc(\nu,X)|=& |abRc(\partial_s,\partial_s)+aRc(\partial_s,\bar X)+bRc(\partial_s,\bar \nu)+Rc(\bar \nu,\bar X)|\\
\leq & |2ab +2\bar g(\bar \nu,\bar X)|+|\langle\nu,\partial_s\rangle\langle X,\partial_s\rangle| O_0(\exp(-3r))\\
&+ O_0(\exp(-4r))+|\bar X||\bar \nu|O_0(\exp(-3r))\\
\leq & |X||\ts|O_0(\exp(-3r))+O_0(\exp(-4r)).
\end{align*}
The other identity can be checked in the same way.

Finally, property (v) follows from  Gauss equation
$$K=R/2-R(\nu,\nu)+H^2/4-\lvert \mathring{A}\rvert^2 /2.$$
\end{proof}

For the following lemma, we use an observation due to Metzger \cite{metzger} which amounts to not applying Leibniz rule to the terms involving $R_{\nu ijk}$ in Simmon's identity.
\begin{lemm}\label{A}
    The Laplacian of $|\tf|^2$ satisfies
    \begin{multline*}
         \Delta \left(\frac{{\lvert\mathring{A}\rvert^2}}{2}\right) = \left(\frac{H^2-4}{2}-
         \lvert \tf \rvert ^2+O(\exp(-3r))\right)\lvert \tf \rvert ^2+\lvert\nabla\tf\rvert^2 \\
         + \tf_{ij}\nabla_k(R_{\nu jik})+\tf_{ij}\nabla_i(Rc(\nu,\partial_j)).
         \end{multline*}
Moreover, integration by parts implies
$$\int_{\Sigma}\tf_{ij}\nabla_k(R_{\nu jik})+\tf_{ij}\nabla_i(Rc(\nu,\partial_j)) d\mu=-2\int_{\Sigma}|Rc(\nu,\cdot)^{\top}|^2d\mu.$$
\end{lemm}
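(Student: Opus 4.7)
The plan is to derive the pointwise identity from the standard Simons identity for a hypersurface in a three-manifold (specialized to the case $H$ constant) and then obtain the integration-by-parts identity from Codazzi. The key idea---Metzger's observation already highlighted in the excerpt---is that when one writes Simons' identity in the present setting, the derivative curvature terms $\nabla_k R_{\nu jik}$ and $\nabla_i Rc(\nu,\partial_j)$ should be kept unexpanded rather than rewritten through the second Bianchi identity.

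For the pointwise identity, Simons' identity in its general form gives, schematically,
\begin{equation*}
\Delta h_{ij}=\nabla_i\nabla_j H + H h_{ik}h^{k}{}_{j} - |A|^{2}h_{ij} + (\text{algebraic ambient curvature}) + (\text{derivative ambient curvature}),
\end{equation*}
with the derivative piece consisting exactly of the two terms above. Since $H$ is constant, $\nabla_i\nabla_j H = 0$. I would contract with $\mathring A^{ij}$ and apply the Bochner-type relation $\Delta(\tfrac{1}{2}|\mathring A|^{2}) = |\nabla\mathring A|^{2} + \mathring A^{ij}\Delta\mathring A_{ij}$. The purely two-dimensional algebraic identities $\mathring A_{ik}\mathring A^{k}{}_{j} = \tfrac{1}{2}|\mathring A|^{2}g_{ij}$ (forcing the cubic term $\mathring A^{ij}\mathring A_{ik}\mathring A^{k}{}_{j}$ to vanish) and $|A|^{2} = |\mathring A|^{2} + H^{2}/2$ reduce the $Hh_{ik}h^{k}{}_{j} - |A|^{2}h_{ij}$ contribution, after contraction with $\mathring A^{ij}$, to $(H^{2}/2 - |\mathring A|^{2})|\mathring A|^{2}$. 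Using Lemma~\ref{ads}(ii) and (iv), the algebraic ambient-curvature pieces combine to $-2|\mathring A|^{2} + O(\exp(-3r))|\mathring A|^{2}$, since $Rc(\nu,\nu) = -2 + O(\exp(-3r))$ and the remaining Ricci/Riemann components agree with their hyperbolic values up to an $O(\exp(-3r))$ error. Collecting everything yields the stated identity.

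For the integration-by-parts identity, I would move both derivatives off of the curvature and onto $\mathring A$. The second piece becomes $-\int_{\Sigma}(\nabla_i\mathring A^{ij})Rc(\nu,\partial_j)\,d\mu$; since $H$ is constant, the contracted Codazzi equation gives $\nabla_i\mathring A^{ij}=Rc(\nu,\partial_j)$, so this integral equals $-\int_{\Sigma}|Rc(\nu,\cdot)^{\top}|^{2}\,d\mu$. For the first piece, I would use the full Codazzi equation $\nabla_k\mathring A_{ij} - \nabla_i\mathring A_{kj} = R_{kij\nu}$ together with the antisymmetry $R_{\nu jik}=-R_{\nu jki}$ to symmetrize, obtaining $2R_{\nu jik}\nabla_k\mathring A_{ij}=R_{\nu jik}R_{kij\nu}=|R_{\nu\cdot\cdot\cdot}|^{2}$, where the last step uses the Riemann tensor pair-swap symmetry. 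In dimension two, the antisymmetry of $R_{\nu jik}$ in $(i,k)$ forces $R_{\nu jik}=\epsilon_{ik}f_{j}$ for a cotangent vector $f$, and a one-line computation gives $|R_{\nu\cdot\cdot\cdot}|^{2}=2|Rc(\nu,\cdot)^{\top}|^{2}$, so this integral also equals $-\int_{\Sigma}|Rc(\nu,\cdot)^{\top}|^{2}\,d\mu$. Summing, the total is $-2\int_{\Sigma}|Rc(\nu,\cdot)^{\top}|^{2}\,d\mu$.

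The main obstacle is bookkeeping in the Codazzi/Riemann conventions: the contracted identity $\nabla_i\mathring A^{ij}=Rc(\nu,\partial_j)$ on one hand, and the two-dimensional identity $|R_{\nu\cdot\cdot\cdot}|^{2} = 2|Rc(\nu,\cdot)^{\top}|^{2}$ on the other, must hold with the same sign convention in order to produce exactly the coefficient $-2$ claimed in the statement. Once the convention is fixed, both parts of the lemma reduce to routine calculation.
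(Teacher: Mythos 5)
Your proof of the pointwise identity is essentially the same as the paper's: both start from Simons' identity for $\Delta \tf$, contract with $\tf^{ij}$, use the two-dimensional algebraic facts $\langle(\tf)^2,\tf\rangle=0$ and $|A|^2=|\tf|^2+H^2/2$, and then estimate the algebraic ambient-curvature terms by $-2|\tf|^2+O(\exp(-3r))|\tf|^2$ using the decay of $g-\bar g$. For the integration-by-parts identity the paper gives no argument (the proof stops after the pointwise formula; the claim is essentially imported from Metzger), whereas you supply a correct derivation: move the derivatives onto $\tf$, apply the contracted Codazzi identity $\nabla_i\tf^{ij}=Rc(\nu,\partial_j)$ for the second term, and for the first term antisymmetrize in $(i,k)$, apply the full Codazzi equation, and use the two-dimensional identity $|R_{\nu\cdot\cdot\cdot}|^2=2|Rc(\nu,\cdot)^\top|^2$ — together with the fact that the sign of the derivative-curvature terms in Simons' identity is tied to the same Codazzi convention, so the final coefficient $-2$ is convention-independent.
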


\begin{proof}

    We assume normal coordinates $x=\{x^i\}_{i=1,2}$ around a point $p$ in the constant mean curvature surface
    $\Sigma$. The tangent vectors are denoted by $\{\partial_1,\partial_2\}$ and the
    Einstein summation convention for the sum of repeated indices is used.

    Simons' identity for the Laplacian of the second fundamental form $A$ (see for instance
    \cite{Huisken1}) implies that
     \begin{multline*}
        \Delta \tf = \left(\frac{H^2}{2}-\n^2 \right)\tf +{R}_{kikm}\tf_{mj}+{R}_{kijm}\tf_{km}
       + H(\tf)^2\\-\frac{\n^2}{2}Hg_{ij} +\nabla_k(R_{\nu jik})+\nabla_i(Rc(\nu,\partial_j)).
        \end{multline*}
        Because $\langle(\tf)^2,\tf\rangle=0$, we have
        \begin{multline*}
        \Delta  \left(\frac{{\lvert\mathring{A}\rvert^2}}{2}\right) = \left(\frac{H^2}{2}-\n^2 \right)\n^2 +{R}_{kikm}\tf_{mj}\tf_{ij}+{R}_{kijm}\tf_{km}\tf_{ij}
       \\+|\nabla\tf |^2+\tf_{ij}\nabla_k(R_{\nu jik})+\tf_{ij}\nabla_i(Rc(\nu,\partial_j)).
        \end{multline*}
   The desired formula follows  from
    \begin{equation*}
        {R}_{stuv}=-(\delta_{su}\delta_{tv}-\delta_{sv}\delta_{tu})+O_0(\exp({-3r})).
    \end{equation*}
 
\end{proof}

Finally, we derive the equation for the Laplacian of $s(x)$ on $\Sigma$. 
\begin{prop}\label{laplace}
The Laplacian of $s$ on $\Sigma$ satisfies
\begin{multline*}
    \Delta s = (4-2\lvert\partial _s^{\top} \rvert^2 )\exp(-2s)+2-H\\ +(H-2)(1-\langle \partial _s, \nu \rangle)+(1-\langle \partial _s, \nu \rangle)^2
        + O_0(\exp(-3r))
\end{multline*}
or, being more detailed,
\begin{multline*}
    \Delta s = H(s)-H \\+(H-2)(1-\langle \partial _s, \nu \rangle)
    +(1-\langle \partial _s, \nu \rangle)^2
        -2\ds^2\exp(-2s)\\+\ds^2O_0(\exp{(-3r)})+O_0(\exp(-4r)).
\end{multline*}
\end{prop}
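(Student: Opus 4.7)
The plan is to apply the standard formula relating the intrinsic Laplacian on $\Sigma$ to the ambient geometry, namely
\begin{equation*}
\Delta_{\Sigma} s \;=\; \Delta_M s \;-\; \mathrm{Hess}_M s(\nu,\nu) \;-\; H\,\nu(s),
\end{equation*}
and then expand each of the three terms using the asymptotic form of the metric $g=ds^2+\sinh^2 s\,g_0+h^{\gamma}/(3\sinh s)+P$ together with Lemma~\ref{ads}. The $P$ term will only ever contribute $O_0(\exp(-4r))$ errors.

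First I would note that, since the metric is block-diagonal modulo $P$, the gradient $\nabla^M s$ equals $\partial_s$ up to an $O_0(\exp(-4r))$ error, so $\Delta_M s = \mathrm{div}_M(\partial_s) = H(s) + O_0(\exp(-4r))$, where $H(s)$ is the mean curvature of the coordinate sphere $\{s=\mathrm{const}\}$ given by Lemma~\ref{ads}(i). Likewise $\nu(s)=\langle\partial_s,\nu\rangle+O_0(\exp(-4r))$, and I would decompose $\nu=\langle\partial_s,\nu\rangle\partial_s+\bar\nu$ with $\bar\nu$ tangent to the coordinate sphere. Because $\partial_s$ is a unit geodesic field, $\nabla^M_{\partial_s}\partial_s=0$ and $\langle\nabla^M_{\bar\nu}\partial_s,\partial_s\rangle=0$, giving
\begin{equation*}
\mathrm{Hess}_M s(\nu,\nu)=\langle\nabla^M_{\bar\nu}\partial_s,\bar\nu\rangle = A^M(\bar\nu,\bar\nu),
\end{equation*}
where $A^M$ denotes the second fundamental form of the coordinate sphere in the direction $\partial_s$. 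Using $A^M_{ij}=\tfrac{1}{2}\partial_s g_{ij}=\sinh s\cosh s\,(g_0)_{ij}+O(\exp(-2s))$ and $|\bar\nu|^2=1-\langle\partial_s,\nu\rangle^2=|\partial_s^{\top}|^2$, one gets
\begin{equation*}
\mathrm{Hess}_M s(\nu,\nu)= \coth s\,|\partial_s^{\top}|^2+|\partial_s^{\top}|^2 O_0(\exp(-3r))+O_0(\exp(-4r)).
\end{equation*}

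For the second, more detailed formula I would then expand $\coth s=1+2\exp(-2s)+O(\exp(-4s))$, set $\varepsilon=1-\langle\partial_s,\nu\rangle$ so that $|\partial_s^{\top}|^2=2\varepsilon-\varepsilon^2$, and assemble
\begin{equation*}
\Delta_{\Sigma}s = H(s)-H+H\varepsilon-|\partial_s^{\top}|^2-2\exp(-2s)|\partial_s^{\top}|^2+|\partial_s^{\top}|^2 O_0(\exp(-3r))+O_0(\exp(-4r)).
\end{equation*}
Since $H\varepsilon-|\partial_s^{\top}|^2=H\varepsilon-(2\varepsilon-\varepsilon^2)=(H-2)\varepsilon+\varepsilon^2$, this rearranges at once into the detailed form in the proposition. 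For the first (less detailed) version, I would further substitute $H(s)=2+4\exp(-2s)+O_0(\exp(-3r))$ from Lemma~\ref{ads}(i), which turns $H(s)-2\exp(-2s)|\partial_s^{\top}|^2$ into $(4-2|\partial_s^{\top}|^2)\exp(-2s)+2+O_0(\exp(-3r))$ and yields the stated expression.

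The whole argument is essentially bookkeeping; the only real care needed is in the error accounting. The mildly delicate point is keeping the terms that are quadratic in $|\partial_s^{\top}|$ separate from the purely $O_0(\exp(-4r))$ errors, because later sections (particularly the ones where these identities are integrated against $|\partial_s^{\top}|^2$) use precisely the refined form with the $|\partial_s^{\top}|^2 O_0(\exp(-3r))$ remainder rather than an absolute bound. Once that separation is respected, the proof is a direct computation with no geometric obstacle.
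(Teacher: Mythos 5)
Your proof is correct and takes essentially the same route as the paper, which reduces to the identity $\mathrm{div}_\Sigma\partial_s = H(s) - \tfrac{H(s)}{2}\lvert\partial_s^\top\rvert^2 + \lvert\partial_s^\top\rvert^2 O_0(\exp(-3r))$ (precisely your $\Delta_M s - \mathrm{Hess}_M s(\nu,\nu)$, computed by the same decomposition of $\nu$ into a $\partial_s$-part and a part tangent to coordinate spheres) and then rearranges via $\lvert\partial_s^\top\rvert^2 = 2\varepsilon-\varepsilon^2$ exactly as you do. One small slip worth correcting: the deficit $\tfrac12\partial_s g_{ij}-\sinh s\cosh s\,(g_0)_{ij}=-\tfrac{\cosh s}{6\sinh^2 s}\,h^{\gamma}_{ij}$ is $O(\exp(-s))$ in coordinate components rather than $O(\exp(-2s))$, though your end conclusion $\mathrm{Hess}_M s(\nu,\nu)=\coth s\,\lvert\partial_s^\top\rvert^2 + \lvert\partial_s^\top\rvert^2 O_0(\exp(-3r)) + O_0(\exp(-4r))$ is still right once the deficit is measured in the ambient metric.
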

\begin{proof}
 Assume that, without loss of generality,  
$$g=ds^2+\sinh^2 s\,  g_0+h^\gamma/(3\sinh s).$$

It suffices to check that
$$ \d_{\Sigma}\partial_s=H(s)-\frac{H(s)}{2}|\ts|^2+|\ts|^2O_0(\exp(-3r))$$
because the rest of the proof follows exactly as in \cite[Proposition 3.14]{neves}.

Given a point $p$ in $\Sigma$, consider an orthonormal frame $ e_1,e_2$ to $\Sigma$ such that $ e_1$ is tangent to coordinate spheres and 
$$e_2=a\partial_s+\bar e_2,\quad i=1,2,$$
where the vector $\bar e_2$ is tangent to coordinate spheres. Denote by $\bar A(s)$ is the second fundamental form for the coordinate sphere $\{|x|=s\}$ and by  $\bar A_{22}(s)$  its evaluation on the unit vector $|\bar e_2|^{-1}\bar e_2.$
Then
$$\langle D_{e_2} \partial_s, e_2 \rangle=\bar A(\bar e_2,\bar e_2)$$
 and thus
$$
 \d_{\Sigma}\partial_s=H(s)-|\ts|^2\bar A_{22}=H(s)-\frac{H(s)}{2}|\ts|^2+|\ts|^2O_0(\exp(-3r)).
$$
   
\end{proof}

\section{Integral estimates}\label{intestimate}

We keep assuming that $\gamma$ is an isometry of $\H^3$ with induced radial function $s(x)$ and  such that
$$\mbox{dist}(\gamma,\mbox{Id})\leq C_0.$$ 
We denote by $T=O_0(\exp(-nr))$ any quantity defined on $\Sigma$ for which we can find a constant $C=C(C_0,C_1,C_2, C_3, C_4, r_1)$ such that
$$|T|\leq C\exp(-nr).$$

We follow \cite{Huisken3}  and use the stability condition in order to estimate the mean curvature
$H$ and the $L^2$ norm of $\n$. 

\begin{lemm}\label{h}
    We have that
        $$H^2=4+\frac{16\pi}{\lvert\Sigma\rvert}+\fint_{\Sigma}O(\exp(-3r))d\mu$$
    or, equivalently,
        $$H=2+\frac{4\pi}{\lvert\Sigma\rvert}+\fint_{\Sigma}O(\exp(-3r))d\mu. $$
        Moreover,
        $$\fint_{\Sigma}\n^2d\mu=\fint_{\Sigma}O(\exp(-3r))d\mu.$$
\end{lemm}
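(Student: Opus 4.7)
The plan is to derive two relations for $H^2$---an identity from Gauss-Bonnet and an upper bound from stability applied to well-chosen test functions---and then compare them.

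For the Gauss-Bonnet step, since $\Sigma$ is a topological sphere, $\int_\Sigma K\,d\mu = 4\pi$. Substituting the Gauss equation from Lemma~\ref{ads}(v) and using that $\tr h^\gamma/(2\sinh^3 s) = O(\exp(-3r))$ (from the $C^3$-bound on $h$) and $|\partial_s^\top|^2 O(\exp(-3r)) = O(\exp(-3r))$ (from $|\partial_s^\top|\leq 1$), one obtains
\begin{equation*}
4\pi = \frac{(H^2-4)|\Sigma|}{4} - \frac{1}{2}\int_\Sigma |\mathring A|^2\,d\mu + \int_\Sigma O(\exp(-3r))\,d\mu,
\end{equation*}
which rearranges to the key identity
\begin{equation*}
(\mathrm{I})\qquad H^2 = 4 + \frac{16\pi}{|\Sigma|} + \frac{2}{|\Sigma|}\int_\Sigma |\mathring A|^2\,d\mu + \fint_\Sigma O(\exp(-3r))\,d\mu.
\end{equation*}

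For the stability step, let $x_1, x_2, x_3$ denote the standard coordinate functions on $S^2 \subset \R^3$, pulled back to $M\setminus K$ via the diffeomorphism of Definition~\ref{defi}. Set $\bar x_i = \fint_\Sigma x_i\,d\mu$ and $\phi_i = x_i - \bar x_i$; each $\phi_i$ has zero mean. Summing the stability inequalities for the $\phi_i$ over $i=1,2,3$ and using $\sum_i x_i^2 \equiv 1$ yields
\begin{equation*}
\int_\Sigma (|A|^2 + Rc(\nu,\nu))\sum_i \phi_i^2\,d\mu \leq \sum_i \int_\Sigma |\nabla x_i|^2\,d\mu.
\end{equation*}
A direct computation (using $d\mu \approx \sinh^2 r\,d\mu_0$ and $|\nabla^\Sigma x_i|^2 \approx (1 - x_i^2)/\sinh^2 r$, so the factors of $\sinh^2 r$ cancel pointwise) gives $\sum_i \int_\Sigma |\nabla x_i|^2\,d\mu = 8\pi + \int_\Sigma O(\exp(-r))\,d\mu$. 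Substituting $|A|^2 = H^2/2 + |\mathring A|^2$ and $Rc(\nu,\nu) = -2 + O(\exp(-3r))$ from Lemma~\ref{ads}(iv), and discarding the nonnegative $\int |\mathring A|^2\sum_i\phi_i^2\,d\mu$ term, yields an inequality of the form
\begin{equation*}
(\mathrm{II})\qquad (H^2-4)(1-|\bar x|^2) \leq \frac{16\pi}{|\Sigma|} + \fint_\Sigma O(\exp(-3r))\,d\mu,
\end{equation*}
with $|\bar x|^2 = \sum_i \bar x_i^2 \leq 1$.

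The sandwich hypothesis $\overline r - \underline r \leq C_4$ ensures $\Sigma$ wraps uniformly around the origin inside a fixed-thickness annulus, which forces $|\bar x|$ to stay bounded away from $1$. Therefore $(\mathrm{II})$ gives $H^2 \leq 4 + 16\pi/|\Sigma| + \fint_\Sigma O(\exp(-3r))\,d\mu$, and combined with the identity $(\mathrm{I})$ this forces $\fint_\Sigma |\mathring A|^2\,d\mu = \fint_\Sigma O(\exp(-3r))\,d\mu$. Plugging this back into $(\mathrm{I})$ yields the formula for $H^2$, and the formula for $H$ follows from Taylor expanding the square root. The main technical obstacle is the stability step: the sharp computation $\sum_i\int_\Sigma|\nabla x_i|^2\,d\mu = 8\pi + \text{small}$ and the bound on $|\bar x|$ must both be carried out without the graphicality result of Section~\ref{ap}, relying only on the geometric sandwich and the approximate radial parametrization of $\Sigma$ that it affords.
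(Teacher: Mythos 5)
The paper's own proof of this lemma is a one-line reference to \cite[Lemma 4.1]{neves} (which in turn follows the stability argument of Huisken--Yau), so I'll assess your reconstruction on its own merits. Your step~(I) from Gauss--Bonnet and Lemma~\ref{ads}(v) is correct, and the general ``sandwich'' strategy is the right one. The gap is in step~(II), and it is a genuine one, not a routine calculation you can defer.

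Using the pulled-back coordinate functions $x_i$ as test functions, a direct computation (with $\bar\nu$ the tangential-to-coordinate-sphere part of $\nu$) gives, up to $O(\exp(-5r))$ metric corrections,
\[
\sum_{i=1}^3 \lvert\nabla^\Sigma x_i\rvert^2 \;=\; \frac{2-\dr^2}{\sinh^2 r}
\;=\;\frac{2\langle\nu,\partial_r\rangle}{\sinh^2 r}+\frac{(1-\langle\nu,\partial_r\rangle)^2}{\sinh^2 r}.
\]
The vector field $\partial_r/\sinh^2 r$ is divergence-free for the hyperbolic metric, so the flux of the first term is exactly $8\pi$ plus exponentially small corrections. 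Hence
\[
\sum_{i}\int_\Sigma \lvert\nabla x_i\rvert^2\,d\mu \;=\; 8\pi \;+\;\int_\Sigma\frac{(1-\langle\nu,\partial_r\rangle)^2}{\sinh^2 r}\,d\mu \;+\;\int_\Sigma O(\exp(-3r))\,d\mu,
\]
with the middle term \emph{nonnegative}. At the point in the paper where Lemma~\ref{h} is proved, you have no bound on $\int_\Sigma(1-\langle\nu,\partial_r\rangle)^2\,d\mu$ beyond the trivial $|\Sigma|$; the estimate $\int_\Sigma(1-\langle\nu,\partial_r\rangle)^2\,d\mu=O(\exp(-\underline r))$ is Proposition~\ref{estimates}(ii), which comes \emph{after} Lemma~\ref{h} and whose companion part~(iii) explicitly uses Lemma~\ref{h}. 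Bounding crudely, $\int_\Sigma (1-\langle\nu,\partial_r\rangle)^2/\sinh^2 r\,d\mu \lesssim \exp(-2\underline r)\lvert\Sigma\rvert \lesssim \exp(2(\overline r - \underline r))=O(1)$, which is not small compared to what you need. So ``$\sum_i\int|\nabla x_i|^2 = 8\pi + \text{small}$'' is not available here, and you have in effect assumed what you are trying to prove (closeness of $\Sigma$ to a coordinate sphere / alignment of $\nu$ with $\partial_r$). Similarly, the claim $|\bar x|$ bounded away from $1$ is asserted by geometric intuition but does not follow from $\overline r - \underline r\leq C_4$ alone: the annulus condition constrains the radial extent but says nothing about whether the area measure of $\Sigma$ could concentrate near a single angular direction, which is exactly what would push $|\bar x|$ to $1$. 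You flag both points yourself as ``the main technical obstacle,'' but that obstacle is the proof.

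The Huisken--Yau argument that the paper invokes sidesteps this by a different choice of test function: one uses the (background) normal components $\langle\bar\nu, E_i\rangle - \overline{\langle\bar\nu, E_i\rangle}$, for which the Weingarten relation gives $\sum_i|\nabla\langle\bar\nu,E_i\rangle|^2 = |\bar A|^2$ (plus lower order), and $\int|\bar A|^2$ is then tied directly back to $\int\n^2$ via Gauss--Bonnet. This produces an inequality that closes up in $\int\n^2$ alone, without needing to pre-control the alignment $\langle\nu,\partial_r\rangle$ or the angular centroid $\bar x$ of the surface. That is the structural difference between what you wrote and what the cited proof does, and it is precisely what makes the cited proof work at this stage of the bootstrap.
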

The proof is the same as in \cite[Lemma 4.1]{neves}.

Proposition 4.2 of \cite{neves} can be easily adapted to show

\begin{prop}\label{estimates}
 The following identities hold:
    \begin{enumerate}
        \item[(i)]
            $$\int_{\Sigma}\exp{(-2s)}d\mu=\pi+O_0(\exp(-\underline{r})).$$
        In particular, there is a constant $C=C(C_0,C_1,C_2, C_3, C_4,r_1)$ so that $$C^{-1}\exp(2\underline{s})\leq |\Sigma| \leq C\exp(2\overline{s}).$$
        \item[(ii)]$$\int_{\Sigma}(1-\langle\partial_s,\nu\rangle)^2d\mu=O_0(\exp(-\underline{r})).$$
        \item[(iii)]
                    $$\int_{\Sigma}\ds^2 d\mu = O_0(1).$$
    \end{enumerate}
\end{prop}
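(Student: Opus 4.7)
The plan is to follow the template of \cite[Proposition 4.2]{neves}, based on integrating two divergence identities on $\Sigma$ and extracting sharp bounds. First I would establish the area bound $|\Sigma|\asymp\exp(2\underline{s})$ claimed in (i): at the inner (respectively outer) tangent point of $\Sigma$ with $\partial B_{\underline{s}}$ (resp.\ $\partial B_{\overline{s}}$), the maximum principle combined with the mean curvature expansion of Lemma~\ref{ads}(i) yields $H(\overline{s})+O_0(\exp(-3\underline{r}))\leq H\leq H(\underline{s})+O_0(\exp(-3\underline{r}))$, and Lemma~\ref{h} then pins down $|\Sigma|$ to the claimed range.

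For (iii) I would integrate $\int_\Sigma \Delta(\exp(-2s))\,d\mu = 0$, which rearranges to
\[
\int_\Sigma \exp(-2s)|\partial_s^\top|^2\,d\mu = \tfrac{1}{2}\int_\Sigma \exp(-2s)\,\Delta s\,d\mu.
\]
Substituting the second form of $\Delta s$ from Proposition~\ref{laplace} and using the area bound, each term on the right is $O_0(\exp(-2\underline{r}))$; the dominant contributions are $4\int \exp(-4s)\,d\mu\leq 4\exp(-4\underline{s})|\Sigma|$ and $(H-2)\int \exp(-2s)\,d\mu\leq C\exp(-2\underline{r})\int\exp(-2s)\,d\mu$. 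Hence $\int\exp(-2s)|\partial_s^\top|^2\,d\mu=O_0(\exp(-2\underline{r}))$, and multiplying by $\exp(2\overline{s})$ (recalling $\overline{s}-\underline{r}\leq C_0+C_4$) yields (iii).

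For (i) and (ii) I would integrate the first form of $\Delta s$ from Proposition~\ref{laplace} over $\Sigma$; using $(H-2)|\Sigma|=4\pi+O_0(\exp(-\underline{r}))$ from Lemma~\ref{h} and the algebraic identity $|\partial_s^\top|^2=2u-u^2$ with $u:=1-\langle\partial_s,\nu\rangle$, the integrated identity becomes
\[
4\int_\Sigma \exp(-2s)\langle\partial_s,\nu\rangle\,d\mu + (H-2)\int_\Sigma u\,d\mu + \int_\Sigma u^2(1+2\exp(-2s))\,d\mu = 4\pi + O_0(\exp(-\underline{r})),
\]
a sum of non-negative terms. The relation $2u = |\partial_s^\top|^2 + u^2$ together with (iii) gives $\int_\Sigma u\,d\mu = O_0(1)$, hence $(H-2)\int u\,d\mu = O_0(\exp(-\underline{r}))$; absorbing $\int u^2\exp(-2s)\,d\mu = O_0(\exp(-\underline{r}))$ analogously, one is left with
\[
4\int_\Sigma \exp(-2s)\,d\mu + \int_\Sigma u^2\,d\mu = 4\pi + O_0(\exp(-\underline{r})).
\]

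The main obstacle is that this single identity cannot separate the two remaining non-negative contributions, since they sum to $4\pi$ up to small error. Following \cite{neves}, I would close this gap by combining the identity with the Gauss--Bonnet theorem applied to the Gauss equation of Lemma~\ref{ads}(v), which (after absorbing $\int\tr h^\gamma/(2\sinh^3 s)\,d\mu$ into the error) yields $\int_\Sigma |\mathring{A}|^2\,d\mu = O_0(\exp(-\underline{r}))$, and then use the pointwise identity $|\partial_s^\top|^2 = u(2-u)$ together with the perturbative-regime bound $u\leq 1$ (equivalently $\langle\partial_s,\nu\rangle\geq 0$) to conclude $\int_\Sigma u^2\,d\mu = O_0(\exp(-\underline{r}))$, which is (ii); part (i) then follows directly from the cleaned identity.
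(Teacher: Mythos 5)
Your proof of part (iii) is correct and essentially a reparametrization of the paper's own argument: the paper pairs Proposition~\ref{laplace} against the test function $(s-\underline s)$ and rearranges, while you integrate $\Delta(\exp(-2s))=0$; both succeed because $\overline s-\underline s$ is uniformly bounded, so the factor in front of $\ds^2$ stays away from zero. (One ordering caveat: to control the $\int_\Sigma\exp(-2s)\,u^2\,d\mu$ term on the right you need $\int_\Sigma u^2\,d\mu=O_0(1)$ first, which comes from integrating $\Delta s$ over $\Sigma$; so (iii) is not fully independent of the identity you derive afterward.)

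For (i) and (ii) the paper gives no proof at all — it simply cites \cite[Proposition 4.2]{neves} — so your reconstruction is a different route, but it has a genuine gap in its last step. You correctly reduce to the single identity
\[
4\int_\Sigma\exp(-2s)\,d\mu+\int_\Sigma u^2\,d\mu=4\pi+O_0(\exp(-\underline r)),
\]
and you correctly observe that one cannot split the two nonnegative contributions from this equation alone. But the proposed closing move does not work. First, the bound $u\leq 1$ (i.e.\ $\langle\partial_s,\nu\rangle\geq 0$) is not available at this point: it is equivalent to $\Sigma$ being a radial graph, which is only established much later (Theorem~\ref{sil3}), and invoking it here would be circular. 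Second, even granting $u\leq 1$, the pointwise identity $\ds^2=u(2-u)$ gives only $u^2\leq u\leq\ds^2$, hence $\int_\Sigma u^2\,d\mu\leq\int_\Sigma\ds^2\,d\mu=O_0(1)$ — not $O_0(\exp(-\underline r))$. Third, the Gauss--Bonnet step you interpose yields $\int_\Sigma\n^2\,d\mu=O_0(\exp(-\underline r))$, but that is already Lemma~\ref{h} and does not connect to $\int_\Sigma u^2\,d\mu$. What is missing is an independent second relation separating $\int\exp(-2s)\,d\mu$ from $\int u^2\,d\mu$ — in \cite{neves} this comes from a Minkowski-type identity obtained by integrating the divergence of the hyperbolic conformal Killing field (equivalently, the Laplacian of $\cosh s$) over $\Sigma$ — and without supplying that identity the argument does not close.
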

\begin{proof}
The first two properties were shown in  \cite[Proposition 4.2]{neves}. We only need to show the last property.

    Integrating by parts in Proposition \ref{laplace} and using Lemma \ref{h} we obtain
     
        \begin{multline*}
            \int_{\Sigma}\ds^2 (1-2\exp(-2s)(s-\underline s))d\mu=-{\int}_{\Sigma}4\exp(-2s)(s-\underline s)d\mu\\
            +4\pi{\fint}_{\Sigma} (s-\underline s) d\mu-{\fint}_{\Sigma}4\pi(1-\langle\partial_s,\nu\rangle)(s-\underline s)d\mu
             \\-\int_{\Sigma}(1-\langle\partial_r,\nu\rangle)^2 (s-\underline s)d\mu+\int_{\Sigma}(s-\underline s)O_0(\exp(-3r))d\mu
             \\+\fint_{\Sigma}O_0(\exp(-3r))d\mu\int_{\Sigma}(s-\underline s)d\mu.
        \end{multline*}
   We know that   $\overline s-\underline s\leq C$
   for some $C=C(C_0,C_4)$ and so we can find $r_0$ such that for all $\underline r\geq r_0$ we have
   $$\int_{\Sigma}\ds^2 d\mu \leq\frac{1}{2}\int_{\Sigma}\ds^2 (1-2\exp(-2s)(s-\underline s))d\mu.$$
    The third property follows at once.
 \end{proof}

The stability of $\Sigma$ can be used in the same way as in \cite[Section 5]{Huisken3} in order to obtain
the next proposition.

\begin{prop}\label{integral}
There is $r_0=r_0(C_1,C_2, C_3,C_4, r_1)$ such that if $\underline{r}\geq r_0$ the following estimate holds
    \begin{equation*}
            \fint_{\Sigma}\n^2d\mu+\int_{\Sigma}\n^4d\mu+\int_{\Sigma}\lvert\nabla
            \tf\rvert^2d\mu\leq \bar C\int_{\Sigma} |Rc(\nu,\cdot)^{\top}|^2d\mu,
    \end{equation*}
  where $\bar C$ is a universal constant. 
In particular,
$$\int_{\Sigma}\n^2d\mu\leq O(\exp(-4\underline{r})). $$
\end{prop}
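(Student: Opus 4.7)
The plan is to follow the stability-plus-Simons argument of \cite[Section 5]{Huisken3}, with the key new ingredient being the integration-by-parts identity of Lemma~\ref{A} (Metzger's observation) that produces $|Rc(\nu,\cdot)^{\top}|^2$ on the right-hand side rather than the full $|Rc|^2$.

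First I would integrate the Simons-type identity of Lemma~\ref{A} over $\Sigma$. Since $\int_{\Sigma}\Delta(|\tf|^2/2)d\mu = 0$ and the curvature derivative terms integrate to $-2\int_{\Sigma}|Rc(\nu,\cdot)^{\top}|^2 d\mu$, this yields
\begin{equation*}
\int_{\Sigma}|\nabla\tf|^2 d\mu \;+\; \tfrac{H^2-4}{2}\int_{\Sigma}|\tf|^2 d\mu \;-\; \int_{\Sigma}|\tf|^4 d\mu \;=\; 2\int_{\Sigma}|Rc(\nu,\cdot)^{\top}|^2 d\mu \;+\; O(e^{-3\underline{r}})\int_{\Sigma}|\tf|^2 d\mu.
\end{equation*}
Using Lemma~\ref{h}, replace $(H^2-4)/2$ by $8\pi/|\Sigma|$ up to an $O(e^{-3\underline r})$ error.

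Next I would test the stability inequality with $f = |\tf| - \overline{|\tf|}$, which is admissible since it has zero mean. Lemmas~\ref{ads}(iv) and~\ref{h} give
$$|A|^2 + Rc(\nu,\nu) = |\tf|^2 + \tfrac{H^2-4}{2} + |\ts|^2 O(e^{-3r}) + O(e^{-4r}),$$
and Kato's inequality gives $|\nabla|\tf||^2 \le |\nabla\tf|^2$. Expanding $f^2 = |\tf|^2 - 2\overline{|\tf|}\,|\tf| + \overline{|\tf|}^{\,2}$ and applying Young's inequality to the cross term, I use the a priori estimate $\overline{|\tf|}^{\,2} \le \fint_{\Sigma}|\tf|^2 d\mu = O(e^{-3\underline r})$ (from Lemma~\ref{h}) and $\int_{\Sigma}|\ts|^2 d\mu = O(1)$ from Proposition~\ref{estimates}(iii) to conclude, after a short computation,
\begin{equation*}
\tfrac{3}{4}\int_{\Sigma}|\tf|^4 d\mu + \tfrac{8\pi}{|\Sigma|}\int_{\Sigma}(|\tf|-\overline{|\tf|})^{2}d\mu \;\le\; \int_{\Sigma}|\nabla\tf|^2 d\mu \;+\; O(e^{-4\underline r}).
\end{equation*}

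Now combining with the integrated Simons identity, the $\int|\tf|^4$ term gets partially eliminated. A short algebraic manipulation, using $8\pi\fint_{\Sigma}|\tf|^2 d\mu - 8\pi\overline{|\tf|}^{\,2} = (8\pi/|\Sigma|)\int(|\tf|-\overline{|\tf|})^2 d\mu \ge 0$, delivers
$$8\pi\fint_{\Sigma}|\tf|^2 d\mu \;\le\; 4\int_{\Sigma}|Rc(\nu,\cdot)^{\top}|^2 d\mu + \tfrac{1}{2}\int_{\Sigma}|\tf|^4 d\mu + O(e^{-4\underline r}).$$
To close the loop and bound $\int|\tf|^4$ and $\int|\nabla\tf|^2$ by the curvature term, I invoke the Michael--Simon Sobolev inequality applied to $|\tf|^2$, together with $\int_{\Sigma}|\tf|^2 d\mu = O(e^{-\underline r})$ (from Lemma~\ref{h} and the area bound of Proposition~\ref{estimates}(i)). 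This produces $\int|\tf|^4 d\mu \le \epsilon(\underline r)\int|\nabla\tf|^2 d\mu + O(e^{-4\underline r})$ with $\epsilon(\underline r)\to 0$, which when fed back into the Simons identity gives the full estimate for $\underline r$ large enough.

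For the final assertion, Lemma~\ref{ads}(iv) yields $|Rc(\nu,\cdot)^{\top}|^2 \le |\ts|^2 O(e^{-6r}) + O(e^{-8r})$; integrating and using Proposition~\ref{estimates}(iii) together with $|\Sigma| = O(e^{2\underline r})$ from Proposition~\ref{estimates}(i) gives $\int_{\Sigma}|Rc(\nu,\cdot)^{\top}|^2 d\mu = O(e^{-6\underline r})$, whence $\int_{\Sigma}\n^2 d\mu = |\Sigma|\fint_{\Sigma}\n^2 d\mu = O(e^{-4\underline r})$ as claimed. The principal technical difficulty is the algebraic closure: both Simons and stability feature the same leading $\int|\tf|^4$ and $\int|\nabla\tf|^2$ terms, and without the a priori smallness of $\int|\tf|^2$ coming from Lemma~\ref{h} (which makes the Sobolev inequality effective), the two inequalities do not combine to produce a one-sided bound.
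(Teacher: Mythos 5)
Your overall architecture mirrors the paper's — integrate the Simons identity of Lemma~\ref{A}, feed the stability inequality, and combine — but the step that actually closes the argument is wrong, and the place where it goes wrong is precisely the step you flag as the ``principal technical difficulty.''

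After testing stability with $f=|\tf|-\overline{|\tf|}$ and using \emph{plain} Kato $|\nabla|\tf||\le|\nabla\tf|$, you obtain a coefficient of $1$ in front of $\int_\Sigma|\nabla\tf|^2\,d\mu$. Since the integrated Simons identity also carries $\int_\Sigma|\nabla\tf|^2\,d\mu$ with coefficient $1$ on the left, the two inequalities cancel the $|\nabla\tf|^2$ terms exactly and the $\int\n^4$ terms do not close with a one-sided bound. Your attempted repair via the Michael--Simon Sobolev inequality contains an exponent error: applying it to $u=\n^2$ gives
\begin{equation*}
\int_\Sigma\n^4\,d\mu \;\le\; C\int_\Sigma\n^2\,d\mu\int_\Sigma|\nabla\tf|^2\,d\mu \;+\; C\Bigl(\int_\Sigma\n^2\,d\mu\Bigr)^2,
\end{equation*}
and since at this stage you only know $\int_\Sigma\n^2\,d\mu=O(e^{-\underline r})$ (from Lemma~\ref{h} and the area bound), the constant term is $O(e^{-2\underline r})$, not $O(e^{-4\underline r})$ as you wrote. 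This is fatal: the target $\int_\Sigma|Rc(\nu,\cdot)^\top|^2\,d\mu$ is $O(e^{-6\underline r})$, so an absolute error of $O(e^{-2\underline r})$ cannot be absorbed into it with a universal constant. Feeding $\int\n^4\le O(e^{-\underline r})\int|\nabla\tf|^2+O(e^{-2\underline r})$ back into the integrated Simons identity yields only $\fint_\Sigma\n^2\,d\mu\le O(e^{-2\underline r})$, hence $\int_\Sigma\n^2\,d\mu\le O(1)$ — \emph{weaker} than what Lemma~\ref{h} already gives, and far from the claimed $O(e^{-4\underline r})$. Iterating does not improve this.

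The ingredient you are missing is the one the paper imports from \cite[Proposition~4.3]{neves}: a \emph{refined} Kato inequality for the (approximately Codazzi, trace-free) tensor $\tf$, which in dimension $2$ gives $|\nabla|\tf||^2\le\tfrac12|\nabla\tf|^2$ up to lower-order error. This is what produces the coefficient $\tfrac{1}{2-\varepsilon}<1$ in
\begin{equation*}
(1-\varepsilon)\int_\Sigma\n^4\,d\mu\le\frac{1}{2-\varepsilon}\int_\Sigma|\nabla\tf|^2\,d\mu+O(e^{-\underline r})\fint_\Sigma\n^2\,d\mu+\bar C\int_\Sigma|Rc(\nu,\cdot)^\top|^2\,d\mu,
\end{equation*}
and it is only because this coefficient is strictly less than $1$ that adding a suitable multiple of this inequality to \eqref{f1} leaves a positive multiple of $\int|\nabla\tf|^2$ on the left. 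Crucially, the only error term here is $O(e^{-\underline r})\fint_\Sigma\n^2\,d\mu$, a \emph{relative} error that is absorbed by the $7\pi\fint_\Sigma\n^2\,d\mu$ on the left; your $O(e^{-2\underline r})$ is an \emph{absolute} error with nothing on the left to absorb it. Without replacing plain Kato by the refined version, the Sobolev patch cannot rescue the argument.
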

\begin{proof}
From Proposition \ref{estimates} we have that
$$|\Sigma|\exp(-3\underline r)= O(\exp(-\underline r)$$
and thus, integrating the identity in Lemma \ref{A} and using Lemma \ref{h}, we can choose $r_0$ so that for $\underline r\geq r_0$
        \begin{multline}\label{f1}
            7\pi\fint_{\Sigma}\n^2d\mu+\int_{\Sigma}\lvert\nabla \tf\rvert^2d\mu\leq\int_{\Sigma}\n^4d\mu+2\int_{\Sigma} |Rc(\nu,\cdot)^{\top}|^2d\mu\\        
            \end{multline}
    We can further choose $r_0$ so that $H\geq 2$ whenever $\underline r\geq r_0$. For that reason
        $$\lvert A \rvert^2 +R(\nu,\nu) \geq \n^2+O(\exp(-3r)),$$
        and so the  stability assumption implies that
            $$\int_{\Sigma}\n^2 f^2d\mu\leq \int_{\Sigma}\lvert \nabla f \rvert^2 d\mu+O(\exp(-3\underline r))\int_{\Sigma}f^2d\mu$$
    for all functions $f$ with $\int_{\Sigma}fd\mu=0.$        
    Following the same argumentation as  in \cite[Proposition 4.3]{neves}, we get that for all $\varepsilon >0$ we can find $\bar C=\bar C(\varepsilon)$ for which
        \begin{multline*}
            (1-\varepsilon)\int_{\Sigma}\n^4d\mu\leq
            \frac{1}{2-\varepsilon}\int_{\Sigma}\lvert\nabla\tf\rvert^2d\mu+O(\exp(-\underline r))\fint_{\Sigma}\n^2d\mu\\
            +\bar C \int_{\Sigma} |Rc(\nu,\cdot)^{\top}|^2 d\mu.
        \end{multline*}
    There is $r_0$ so that, for all $\underline r\geq r_0$, we can multiply this inequality by $(1+\varepsilon)/(1-\varepsilon)$ (with $\varepsilon$ small) and add to equation \eqref{f1} in order to obtain
        \begin{equation*}
            \fint_{\Sigma}\n^2d\mu+\int_{\Sigma}\n^4d\mu+\int_{\Sigma}\lvert\nabla \tf\rvert^2d\mu\leq \bar C \int_{\Sigma} |Rc(\nu,\cdot)^{\top}|^2 d\mu
        \end{equation*}
        for some universal constant $\bar C$.
        
        The last assertion  is a consequence of Lemma \ref{ads} and Proposition \ref{estimates}.
\end{proof}

\section{Intrinsic geometry}\label{Intrin}

We  continue adapting the work done by the authors in \cite{neves} and so we now study the intrinsic geometry of $\Sigma$. More precisely, we show

\begin{thm}\label{intrinsic}
There is $r_0=r_0(C_1,C_2, C_3,C_4, r_1)$ so that if $\underline r\geq r_0$ the following property holds. After pulling back by a suitable diffeomorphism from $\Sigma $ to $S^2$, the metric
    $$\hat{g} =4\pi\a ^{-1}{g }$$
can be written as $$\exp(2\beta )g_0$$ with
    $$\sup\lvert\beta \rvert=O(\exp(-\underline{r} )),\quad
    \int_{S^2}|\nabla \beta |^2d\mu_0=O(\exp(-2\underline{r} )),$$
    and 
    \begin{equation}\label{c1}
       \int_{S^2}x_j \exp{(2\beta )}d{\mu}_0 =0\quad\mbox{for }\,j=1,2,3,
    \end{equation}
     where the norms are computed with respect to $g_0$, the standard round metric on $S^2$.
\end{thm}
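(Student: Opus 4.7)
The plan is to apply the uniformization theorem to $(\Sigma,\hat g)$, normalize the resulting uniformizing map by the M\"obius group so that \eqref{c1} holds, and then control the conformal factor $\beta$ by analyzing the Liouville equation it satisfies, using the Gaussian curvature estimate from Lemma \ref{ads}(v) and the integral bounds from Propositions \ref{estimates} and \ref{integral}. Concretely, uniformization supplies some conformal diffeomorphism $\phi_0\colon S^2\to\Sigma$ with $\phi_0^{*}\hat g=\exp(2\beta_0)g_0$; precomposing $\phi_0$ with a M\"obius transformation $\mu$ of $S^2$ satisfying $\mu^{*}g_0=\exp(2v)g_0$ replaces $\beta_0$ by $\beta_0\circ\mu+v$. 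Since the conformal group of $S^2$ modulo rotations is the three-dimensional hyperbolic space $\mathbb H^3$, the three balancing conditions \eqref{c1} define a map $\mathbb H^3\to\mathbb R^3$, and a Brouwer fixed point argument analogous to the one recalled in Section \ref{sec2} for centering $h$ produces a $\mu$ realizing \eqref{c1}.

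Next, estimate the Gaussian curvature $\hat K=(|\Sigma|/4\pi)K$. Inserting the expansion of $H^2$ from Lemma \ref{h} into Lemma \ref{ads}(v) gives
\begin{equation*}
K-\frac{4\pi}{|\Sigma|}=\frac{\tr h^{\gamma}}{2\sinh^3 s}-\frac{|\mathring A|^2}{2}+|\partial_s^{\top}|^2O_0(\exp(-3r))+O_0(\exp(-4r))+\fint_{\Sigma}O(\exp(-3r))\,d\mu.
\end{equation*}
Using $|\Sigma|=O(\exp(2\underline r))$ from Proposition \ref{estimates}(i), the bound $\int|\partial_s^{\top}|^2\,d\mu=O_0(1)$ from Proposition \ref{estimates}(iii), and the bound $\int|\mathring A|^2\,d\mu=O(\exp(-4\underline r))$ from Proposition \ref{integral}, the leading contribution $(|\Sigma|/8\pi)\tr h^{\gamma}/\sinh^3 s$ to $\hat K-1$ is pointwise $O(\exp(-\underline r))$, and $\hat K-1$ has uniformly small $L^p$ norm with respect to $d\mu_{\hat g}$ for appropriate finite $p>1$.

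Third, convert this curvature bound into estimates for $\beta$. The conformal factor satisfies the Liouville equation $-\Delta_0\beta+1=(\hat K\circ\phi)\exp(2\beta)$ on $S^2$. Linearizing around $\beta\equiv 0$ yields the operator $L=-\Delta_0-2$, whose kernel is spanned by the coordinate functions $x_1,x_2,x_3$. The first-order expansion of \eqref{c1} is precisely the orthogonality $\int_{S^2}x_j\beta\,d\mu_0=0$, so $\beta$ lies in the complement of $\ker L$ and $L$ is invertible there with a universal constant. Moser iteration then upgrades the $L^p$ smallness of $\hat K-1$ to the pointwise bound $\sup|\beta|=O(\exp(-\underline r))$, and testing the Liouville equation against $\beta$ (after subtracting its integral) gives $\int_{S^2}|\nabla\beta|^2\,d\mu_0=O(\exp(-2\underline r))$.

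The main obstacle is the M\"obius normalization step: one must rule out degeneration of $\mu$ to the ideal boundary of $\mathbb H^3$, for otherwise $\beta$ could be unbounded and the linearization around $\beta=0$ invalid. Ruling this out requires a continuity argument that uses the integral smallness of $\hat K-1$ to bound the size of the M\"obius adjustment, so there is a mild circular dependence between the normalization step and the PDE analysis that must be handled with care; once this nondegeneracy is secured, the remaining arguments are standard elliptic analysis on $S^2$ and should closely mirror the analogous computation in \cite{neves}.
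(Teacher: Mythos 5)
Your proposal diverges from the paper's proof in a structurally important way: the paper does \emph{not} attempt to control $\hat K-1$ in $L^p$. Instead, it first establishes Theorem~\ref{2ff} --- the pointwise bound $\n^2\le C\exp(-4\underline r)$, obtained by Moser iteration applied to the Simons-type inequality of Lemma~\ref{a2} --- and only then proves Theorem~\ref{intrinsic}. With that pointwise estimate in hand, Lemma~\ref{ads}(v) and Lemma~\ref{h} give the \emph{pointwise} bound $|\hat K-1|=O(\exp(-\underline r))$, and the paper then cites standard uniformization theory (Theorem~5.1 of \cite{neves}) which takes pointwise smallness of $|\hat K-1|$ as its hypothesis. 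You instead rely only on the integral estimates of Propositions~\ref{estimates} and~\ref{integral}, which give $L^2(d\hat\mu)$ smallness for the $\n^2$ and $|\partial_s^\top|^2$ contributions but no pointwise control, and then try to push an $L^p$-to-$L^\infty$ argument through the Liouville equation.

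This is where the genuine gap lies, and it is not a ``mild circular dependence.'' Without Theorem~\ref{2ff} you cannot even guarantee $\hat K>0$ pointwise (the $-\tfrac12\n^2$ term in Gauss's equation could be large negative at isolated points), and the quantitative statement you need --- that $L^2$-smallness of $\hat K-1$ together with Gauss--Bonnet and a M\"obius normalization forces $\|\beta\|_{C^0}$ to be small --- is precisely the content of a nontrivial concentration-compactness / Moser--Trudinger argument (it is, in a different guise, close to what De Lellis--M\"uller prove, and what the quoted ``standard theory'' in \cite{neves} encapsulates for pointwise-small curvature). Your sketch of linearizing around $\beta=0$ and inverting $L=-\Delta_0-2$ on $(\ker L)^{\perp}$ presupposes the very smallness of $\beta$ that rules out M\"obius degeneration; securing that a priori bound is the whole difficulty, not a technicality to defer. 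The clean resolution is the paper's: prove the pointwise $\n^2$ estimate first (Theorem~\ref{2ff}), so that $|\hat K-1|$ is small pointwise and the uniformization step is elementary. Note that Theorem~\ref{2ff} is needed later anyway (e.g.\ in Theorems~\ref{Aprox} and~\ref{sil3}), so proving it before Theorem~\ref{intrinsic} costs nothing.
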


Like in \cite[Theorem 5.1]{neves}, we need to estimate $\n$. The theorem will then  follow from Gauss equation. Therefore, we start by proving

\begin{thm}\label{2ff}
There are constants $$r_0=r_0(C_1,C_2, C_3,C_4, r_1)\quad\mbox{and}\quad C=C(C_1,C_2, C_3,C_4, r_1)$$ such that, provided $\underline r\geq r_0$, we have
$$\n^2\leq C\exp(-4\underline r).$$
\end{thm}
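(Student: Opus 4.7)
The plan is to upgrade the $L^2$ estimate $\int_\Sigma |\mathring{A}|^2 d\mu = O(\exp(-4\underline{r}))$ from Proposition \ref{integral} to a pointwise estimate via Moser iteration on Simons' identity. This follows the strategy of \cite{Huisken3} and \cite{neves}, but exploits Metzger's integration-by-parts device already highlighted in Lemma \ref{A} so that one never has to differentiate the ambient curvature tensor.

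First, multiply the identity of Lemma \ref{A} by $|\mathring{A}|^{2p-2}$ and integrate over $\Sigma$. The two ambient-curvature-derivative contributions $\mathring{A}_{ij}\nabla_k R_{\nu jik}$ and $\mathring{A}_{ij}\nabla_i Rc(\nu,\partial_j)$ should not be expanded by Leibniz; instead one integrates by parts so the derivative falls on $\mathring{A}_{ij}|\mathring{A}|^{2p-2}$, leaving only the undifferentiated curvature tensor, whose pointwise size is controlled by Lemma \ref{ads}(iv) as $|\partial_s^\top|\,O(\exp(-3r))+O(\exp(-4r))$. By Lemma \ref{h}, the $(H^2-4)/2$ factor is $O(|\Sigma|^{-1})$ and hence subcritical, and Cauchy-Schwarz lets one absorb the resulting $|\mathring{A}|^{2p-2}|\nabla\mathring{A}|$ cross-terms into the good $|\nabla\mathring{A}|^2|\mathring{A}|^{2p-2}$ term present on the left.

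Next, invoke the stability inequality with the zero-mean test function $f=|\mathring{A}|^p-\overline{|\mathring{A}|^p}$, and combine it with the Michael-Simon-Sobolev inequality on $\Sigma$ (valid because $|H|$ is essentially $2$ by Lemma \ref{h}) to obtain a reverse-H\"older gain: $L^{2p}$ control of $|\mathring{A}|^2$ passes to $L^{2pq}$ control for some fixed $q>1$. Starting the iteration from the $L^2$ seed of Proposition \ref{integral}, tracking the $p$-dependence of the constants in the classical Moser manner, and using Proposition \ref{estimates} to guarantee $|\Sigma|\approx \exp(2\underline{r})$, produces the desired bound $\sup_\Sigma |\mathring{A}|^2\leq C\exp(-4\underline{r})$.

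The main obstacle is the non-decaying factor $|\partial_s^\top|$ appearing in the Ricci error from Lemma \ref{ads}(iv): only the integrated bound $\int_\Sigma |\partial_s^\top|^2 d\mu = O(1)$ from Proposition \ref{estimates}(iii) is available, so these error terms cannot be estimated by pointwise smallness. They must be kept under integration and paired with the $\exp(-3r)$ or $\exp(-4r)$ weights so that the product contributes with the correct order $\exp(-4\underline{r})$. Making the exponents balance at every stage of the Moser iteration, while keeping the constants uniform in $p$ (or, equivalently, a one-step $L^2\to L^\infty$ argument via a sup-mean-value inequality tailored to Simons' equation), is the technical heart of the argument; once it is carried out on $\Sigma$ with $\underline{r}\geq r_0$ large, the pointwise bound falls out.
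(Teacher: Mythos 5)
Your proposal correctly identifies the overall strategy (Simons' identity plus Moser iteration seeded by the $L^2$ bound from Proposition~\ref{integral}), and you correctly flag the $|\partial_s^\top|$ factor as the critical difficulty. However, the specific route you choose is what creates that difficulty, and you do not resolve it: this is a genuine gap, not merely a technical detail.

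You propose to carry Metzger's integration-by-parts device through every step of the iteration, so that the curvature appears only undifferentiated, in the form $Rc(\nu,\cdot)^\top$. But by Lemma~\ref{ads}(iv) that quantity is bounded only by $|\partial_s^\top|\,O(\exp(-3r)) + O(\exp(-4r))$, and Proposition~\ref{estimates}(iii) gives merely $\int_\Sigma |\partial_s^\top|^2\,d\mu = O(1)$, with no higher-$L^p$ or pointwise control. In the iteration you must bound quantities of the schematic form $\int_\Sigma |\mathring{A}|^{2p-2}|Rc(\nu,\cdot)^\top|^2\,d\mu$; H\"older then asks for $L^{q}$ control of $|\partial_s^\top|^2$ with $q>1$, which is not available, and the loop does not close. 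You acknowledge this issue and state that "making the exponents balance\ldots is the technical heart," but you offer no mechanism for making them balance, so the proof as written fails at precisely the point you identified.

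The paper resolves this by \emph{not} using Metzger's device in the pointwise estimate. In Lemma~\ref{a2}, the paper expands the Leibniz rule, writing $\nabla_k(R_{\nu jik}) + \nabla_i(Rc(\nu,\partial_j)) = D_k R_{\nu jik} + D_i Rc(\nu,\partial_j) + (\text{second fundamental form corrections})$. The crucial observation is that the ambient derivatives $D_kR_{\nu jik}$, $D_i Rc(\nu,\partial_j)$ and the full Riemann components $R_{\nu j\nu i}$, $R_{mjik}$, after the contraction with the trace-free tensor $\mathring{A}_{ij}$ annihilates the constant-curvature background, are all $O(\exp(-3r))$ \emph{pointwise} with no $|\partial_s^\top|$ factor whatsoever. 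This yields a pointwise differential inequality $\Delta(|\mathring{A}|^2/2) \geq -|\mathring{A}|^4 - C|\mathring{A}|^2 - C\exp(-6r)$, which is what Moser iteration actually needs. Metzger's device is used only for the \emph{integral} estimate in Proposition~\ref{integral}, where the $L^2$ bound on $|\partial_s^\top|$ is exactly sufficient. In short: the two estimates deliberately use two different treatments of the curvature-derivative terms, and your proposal conflates them.

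One further difference worth noting: the paper handles the superlinear $|\mathring{A}|^4$ term in the differential inequality by first establishing a uniform bound $|\mathring{A}|^2 \le C_B$ via a contradiction/rescaling argument (so that $|\mathring{A}|^4 \le C_B^2|\mathring{A}|^2$), and only then running the standard $L^2 \to L^\infty$ Moser step. Your proposal attempts to absorb the quartic term directly through the stability inequality with test functions $|\mathring{A}|^p - \overline{|\mathring{A}|^p}$ inside a full $L^{2p}\to L^{2pq}$ iteration; this is a substantially more delicate bookkeeping task (the constants grow in $p$ from several sources) and is not carried out. The paper's two-step structure is cleaner and avoids this issue.
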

\begin{proof}

We will  need the following consequence of Lemma \ref{A}.
\begin{lemm}\label{a2}
There are constants $$r_0=r_0(C_1,C_2, C_3,C_4, r_1)\quad\mbox{and}\quad C=C(C_1,C_2, C_3,C_4, r_1)$$ such that, provided $\underline r\geq r_0$, we have
$$ \Delta \left(\frac{{\lvert\mathring{A}\rvert^2}}{2}\right)\geq -\n^4-C\n^2-C\exp(-6r) .$$

\end{lemm}
\begin{proof}
	A simple computation shows that
	\begin{align*}
	\nabla_k(R_{\nu jik})+\nabla_i(Rc(\nu,\partial_j))= & D_k R_{\nu j ik}+D_{i}Rc({\nu }, \partial_j)+\tf_{km}R_{mjik}-\tf_{ik} R_{\nu j\nu k}\\
	&+\tf_{im}Rc(\partial_m,\partial_j)+\frac{H}{2}R_{\nu j \nu i}-A_{ij}Rc(\nu,\nu)\\
	=& O(\exp(-3r))+\tf_{ij}O(\exp(-3r))
	+\frac{H}{2}R_{\nu j \nu i}
	\\&-\frac{H}{2}g_{ij}Rc(\nu,\nu).
	\end{align*}
	Moreover,
	$$|\tf_{ij}R_{\nu j \nu i}|\leq \n O(\exp(-3r))$$
	and thus, choosing $r_0$ so that for all $\underline r\geq r_0$ we have $ H\leq 3$, we obtain that
	$$\tf_{ij}\nabla_k(R_{\nu jik})+\tf_{ij}\nabla_i(Rc(\nu,\partial_j))\geq -C\n^2-C\exp(-6r)$$
	for some $C=C(C_1,C_2, C_3,C_4, r_1).$ Lemma \ref{A} implies the desired result.
	
\end{proof}

Next, we argue that we can choose $r_0$ so that for all $\underline r\geq r_0$ 
$$\n^2\leq C_B,$$
where $C_B=C_B(C_1,C_2, C_3,C_4, r_1).$
 Suppose that $$\sup_{\Sigma}\n=\n(x_1)\equiv 1/\sigma\quad\mbox{with }\sigma\leq \varepsilon_0$$
        where $\varepsilon_0$ will be chosen later (and depending only on $C_1,C_2, C_3,C_4,$ and  $r_1$). 
        
        Set ${g_{\sigma}}=\sigma^{-2}\,g$ and denote the various geometric quantities
    with respect to $g_{\sigma}$ using an index $\sigma$. We can take $\varepsilon_0$ and $r_0$ so that for all $\underline r\geq r_0$ the mean curvature with respect to $g_{\sigma}$ satisfies $H_{\sigma}=\sigma H\leq 1$. Therefore     $${\lvert A\rvert}^2_{\sigma}=H_{\sigma}^2/2+\n^2_{\sigma}=\sigma^2 (H^2/2+\n^2)\leq 2.$$
    
   The argumentation in the proof of \cite[Lemma 5.2]{neves} shows that, provided we fix $\varepsilon_0$ sufficiently small, there are uniform constants $s_0$ and $C_S$ so that
    for every compactly supported function $u$
        $$\left({\int_{{B}^{\sigma}_{s_0}(x_1)\cap\Sigma}u^2\,d\mu_{\sigma}}\right)^{1/2}\leq
        2C_S\int_{{B}^{\sigma}_{s_0}(x_1)\cap\Sigma}\lvert{\nabla}u\rvert_{\sigma}\,d\mu_{\sigma}.$$
    Finally, because $\n_{\sigma}$ is uniformly bounded, it follows easily from Lemma \ref{a2} that
        $$\Delta_{\sigma}\n_{\sigma}^2\geq -C\n_{\sigma}^2-C\exp(-6r),$$
        where $C=C(C_1,C_2, C_3,C_4, r_1).$
    We have now all the necessary conditions to apply Moser's iteration argument (see, for instance, \cite[Lemma 11.1.]{Li}) and obtain that, for some
    constant $C=C(C_1,C_2, C_3,C_4, r_1)$,
        \begin{multline*}
            1=\n_{\sigma}^2(x_1)\leq C \int_{\Sigma}\n_{\sigma}^2d\mu_{\sigma}+C\exp(-6r)\\
            =C\int_{\Sigma}\n^2d\mu+C\exp(-6\underline r)=C\exp(-4\underline r).
        \end{multline*}
	This gives us a contradiction if we choose $r_0$ sufficiently large. 
	
	Because $\n$ is uniformly bounded provided we choose $r_0$ sufficiently large, we can argue again as in the proof of  \cite[Lemma 5.2]{neves}  and conclude the existence of uniform constants $s_0$ and $C_S$ so that
    for every compactly supported function $u$ and $x\in \Sigma$
        $$\left({\int_{{B}_{s_0}(x)\cap\Sigma}u^2\,d\mu}\right)^{1/2}\leq
        2C_S\int_{{B}_{s_0}(x)\cap\Sigma}\lvert{\nabla}u\rvert\,d\mu.$$
       Moreover
       $$ \Delta \left(\frac{{\lvert\mathring{A}\rvert^2}}{2}\right)\geq -(C_B^2+C)\n^2-C\exp(-6r)$$
       and so Moser's iteration implies that
       $$\n^2\leq C \int_{\Sigma}\n^2d\mu+C\exp(-6r)\leq C\exp(-4\underline r),$$
       where $C=C(C_1,C_2, C_3,C_4, r_1).$
\end{proof}

We can now prove Theorem \ref{intrinsic}.

\begin{proof}[{\bf Proof of Theorem \ref{intrinsic}}]
Denote by $ \widehat K$ the Gaussian curvature of $\hat g$. Because $ \widehat K= |\Sigma| K (4\pi)^{-1}$, we obtain from Lemma \ref{ads}, Lemma \ref{h}, and Theorem \ref{2ff} that, provided $r_0$ is sufficiently large,
$$| \widehat K-1|=O(\exp(-\underline r)).$$

Standard theory (see for instance the proof of \cite[Theorem 5.1]{neves}) implies that if we choose $r_0$ large enough so that $| \widehat K-1|$ is sufficiently small for all $r\geq r_0$, then, after pulling back by a diffeomorphism, the metric
    $\hat{g} $ can be written as $\exp(2\beta )g_0$ where the smooth function $\beta$ satisfies all the desired conditions
\end{proof}

\section{Approximation to coordinate spheres}\label{ap}

In this section we show that a stable sphere $\Sigma$ with constant mean curvature is close  to some coordinate spheres and that the corresponding coordinate system is at a bounded distance from the identity.  Like in \cite{metzger}, this result relies on a theorem by De Lellis and M\"uller \cite{DeLellis}.

\begin{thm}\label{Aprox} There are constants $r_0, C_0,$ and $C$ depending only on $C_1,C_2, C_3,C_4,$ and $r_1$ for which, if $\underline r\geq r_0$, the following property holds.

There is an isometry $\gamma$ of $\H^3$ with 
$$\mbox{dist}(\gamma,\mbox{Id})\leq C_0$$
such that,
if we consider the function on $\Sigma$ given by
$$u(x)=s(x)-\hat r\quad\mbox{where}\quad|\Sigma|=4\pi\sinh^2 \hat r,$$
then
$$\sup_{\Sigma} |u|\leq C\exp(-\underline r)\quad\mbox{and}\quad\int_{\Sigma}|\ts|^2d\mu\leq C\exp(-2\underline r).$$
\end{thm}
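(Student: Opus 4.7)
The plan is to apply the De Lellis--M\"uller theorem \cite{DeLellis} in a conformal Euclidean model of hyperbolic space, using the estimates from the previous sections as the quantitative inputs. I first produce an approximating round sphere in the ambient chart, then interpret its centre and radius in terms of an isometry $\gamma$ of $\H^3$, and finally translate the Euclidean approximation bounds into the required estimates on $u=s-\hat r$.

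Combining Theorem \ref{2ff} with the area bound $|\Sigma|\leq C\exp(2\overline r)$ from Proposition \ref{estimates} and the hypothesis $\overline r-\underline r\leq C_4$ yields
$$\int_{\Sigma}|\mathring A|^2\,d\mu\leq C\exp(-2\underline r).$$
Theorem \ref{intrinsic} then says that, after a suitable parametrization of $\Sigma$ by $S^2$, the rescaled metric $\hat g=4\pi|\Sigma|^{-1}g$ is conformal to $g_0$ with small conformal factor $\beta$ and satisfies the centering condition \eqref{c1}. Thus $\Sigma$ has intrinsic geometry quantitatively close to round and a Willmore-type defect of size $\exp(-2\underline r)$.

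Now realize $\Sigma$ inside the Euclidean chart attached to the asymptotically hyperbolic coordinates, or equivalently inside a conformal model of $\H^3$ in which hyperbolic geodesic spheres become Euclidean spheres. Since $\int|\mathring A|^2\,d\mu$ is conformally invariant for surfaces in $3$-manifolds, the De Lellis--M\"uller theorem produces a conformal parametrization $\phi\colon S^2\to \Sigma$, a point $a\in \R^3$ and a radius $\rho>0$ with
$$\|\phi - a - \rho\,\mathrm{id}_{S^2}\|_{W^{2,2}(S^2)}^2 \leq C\int_{\Sigma}|\mathring A|^2\,d\mu.$$
Sobolev embedding on $S^2$ upgrades this to a $C^0$ bound on $\phi-a-\rho\,\mathrm{id}_{S^2}$ at the same rate, and the centering normalization \eqref{c1} aligns $\phi$ (up to rotations) with the angular variables induced by the coordinate system that $\gamma$ will produce.

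Let $\gamma$ be the isometry of $\H^3$ whose action on the background chart corresponds to translation by $a$, so that in the coordinates $(s,\theta)$ induced by $\gamma$ the geodesic sphere of the correct area centred at the new origin is precisely $\{s=\hat r\}$. The distance bound $\dist(\gamma,\mathrm{Id})\leq C_0$ follows from the fact that $\Sigma$ is confined to the shell $\underline r \leq r(x)\leq \underline r + C_4$: if $|a|$ were too large, the approximating sphere would be forced out of this shell, contradicting the $W^{2,2}$-closeness of $\phi$ to $a+\rho\,\mathrm{id}_{S^2}$. Translating the approximation back to the coordinates $(s,\theta)$ produces $\sup_\Sigma|u|\leq C\exp(-\underline r)$, and because $|\ts|^2 = |\nabla^{\Sigma} s|^2 = |\nabla^{\Sigma} u|^2$, the $W^{2,2}$-bound controls the tangential derivatives of $u$ in $L^2$ at the rate $\exp(-2\underline r)$. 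The principal difficulty is controlling the translation parameter $a$ uniformly in the structural constants $C_1,\dots,C_4,r_1$; this is precisely where the bounded-thickness hypothesis $\overline r-\underline r\leq C_4$ is indispensable, since without it $\gamma$ could drift to infinity and no uniform approximation by a fixed coordinate sphere would be possible.
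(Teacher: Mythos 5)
Your proposal follows the same broad strategy as the paper -- invoke De Lellis--M\"uller in a conformally flat model of $\H^3$, extract a centre and radius, and interpret the centre as an isometry $\gamma$ -- but there are two places where the argument as written has real gaps.

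First, the sentence ``realize $\Sigma$ inside the Euclidean chart attached to the asymptotically hyperbolic coordinates, or equivalently inside a conformal model of $\H^3$'' treats these as interchangeable, which they are not: the asymptotically hyperbolic metric $g$ is \emph{not} conformally flat, so conformal invariance of $\int_\Sigma|\mathring A|^2\,d\mu$ does not directly relate the quantity in Theorem~\ref{2ff} (computed with respect to $g$) to its Euclidean counterpart. What conformal invariance does give is the identity $\int_\Sigma|\hat{\mathring A}|^2\,d\mathcal{H}^2 = \int_\Sigma |\bar{\mathring A}|^2\,d\bar\mu$ relating the Euclidean and the \emph{hyperbolic} traceless second fundamental forms. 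One still needs the separate comparison $\bar{\mathring A} = \mathring A + A\cdot O(\exp(-3r)) + O(\exp(-3r))$ (from \cite[Section 7]{Huisken2}) to pass from $g$ to $\bar g$. This is a small but indispensable estimate; without it the hypothesis of the De Lellis--M\"uller theorem is unverified. You should also confirm, as the paper does, that the Euclidean area $\mathcal{H}^2(\Sigma)=4\pi + O(\exp(-\underline r))$, since that normalization is what fixes $R$ near $1$ and is needed to apply the theorem at all.

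Second, the derivation of $\int_\Sigma|\partial_s^\top|^2\,d\mu = O(\exp(-2\underline r))$ from the $W^{2,2}$-version of De Lellis--M\"uller, while plausible, is asserted rather than carried out. The $W^{2,2}(S^2)$ bound controls the Euclidean quantity $\psi - a - R\,\mathrm{id}_{S^2}$ at the rate $\exp(-2\underline r)$; to turn that into an $L^2$-bound on $\partial_s^\top$ with respect to the metric $g$ on $\Sigma$ you must track two competing scalings: $ds/d|x-a|\sim\exp(\hat r)$ converts Euclidean displacement to hyperbolic distance (costing $\exp(\hat r)$), while $|\cdot|_g^2\sim \exp(-2\hat r)|\cdot|_{g_0}^2$ for tangent covectors and $d\mu_g\sim\exp(2\hat r)d\mu_0$ almost cancel each other. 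The net effect does produce $\exp(-2\underline r)$, but since this is exactly the delicate book-keeping step the proposal leaves to the reader, it should be spelled out. The paper sidesteps this entirely by a different mechanism: having first shown $\sup_\Sigma|u|=O(\exp(-\underline r))$ and $|H(s(x))-H|=O(\exp(-3\underline r))$ for all $x\in\Sigma$, it multiplies the second identity of Proposition~\ref{laplace} by $u$, integrates by parts, and reads off $\int_\Sigma|\partial_s^\top|^2\,d\mu = O(\exp(-2\underline r))$ directly. That route avoids the chart-dependent scaling analysis and is the cleaner argument here; I would recommend adopting it.
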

\begin{proof}
 Fix an isometry between hyperbolic space and the unit ball
 $$F \colon M\setminus K\longrightarrow B_1\setminus \mbox{Ball}.$$
 Denoting the euclidean distance by $|x|$ and the hyperbolic induced measure on $\Sigma$ by $d\bar \mu$, we know that
 $$\sinh r=\frac{2|x|}{1-|x|^2}$$
 and $d\bar\mu-d\mu=O(\exp(-3r))d\mu$ (see \cite[Section 7]{Huisken2}), respectively.

 Let $\hat \tf$ and $\bar \tf$ denote, respectively, the trace free part of the second fundamental form with respect to the euclidean metric and hyperbolic metric. De Lellis and M\"uller Theorem \cite{DeLellis} implies the existence of a universal constant $C_U$ such that
 \begin{equation*} \sup_{\Sigma}||x-\vec a|-R|^2\leq C_U R^2 \int_{\Sigma}\left|\hat \tf\right|^2d\mathcal{H}^2=C_UR^2\int_{\Sigma}\left|\bar \tf\right|^2d\bar \mu,
\end{equation*} 
where $R$ and $\vec a$ are defined as
 $$R^2=(4\pi)^{-1}\mathcal H^2(\Sigma)\quad\mbox{and}\quad \vec a=\fint_{\Sigma} \mbox{id}_{\Sigma}\,d\mathcal{H}^2.$$

 On the other hand, we know  from Proposition \ref{estimates} that
 \begin{multline*}
 \mathcal{H}^2(\Sigma)=\int_{\Sigma}\frac{(1-|x|^2)^2}{4}d\bar \mu=\int_{\Sigma}\frac{|x|^2}{\sinh^2 r}d\mu+O(\exp(-3\underline r))\\
 =\int_{\Sigma}\sinh^{-2} rd\mu+O(\exp(-\underline r))=\int_{\Sigma}{4}\exp(-2r)d\mu+O(\exp(-\underline r))\\
 =4\pi+O(\exp(-\underline r))
 \end{multline*}
 and thus
 $$R=1+O(\exp(-\exp(-\underline)).$$
 Moreover, we also know from \cite[Section 7]{Huisken2} that
 $$\bar \tf= \tf+AO(\exp(-3r))+O(\exp(-3r)$$
 and thus, it follows from Lemma \ref{h} and Theorem \ref{2ff} that, if we take $r_0$ sufficiently large,
 $$\left|\bar \tf\right|^2\leq 2|\tf|^2+O(\exp(-6\underline r)).$$
  As a result, we obtain from Proposition \ref{estimates}
 $$\int_{\Sigma}\left|\bar \tf\right|^2d\bar \mu\leq 2\int_{\Sigma}\n^2d\mu+O(\exp(-4\underline r))\leq O(\exp(-4\underline r)).$$
 
 Therefore, we have
 \begin{equation}\label{silence}
 \sup_{\Sigma}||x-\vec a|-R|\leq D_1\exp(-2\underline r),
 \end{equation}
where $D_1=D_1(C_1,C_2, C_3,C_4, r_1)$. 

 There is a constant $D_2=D_2(r_1,C_4)$ such that, for all $x$ in $\Sigma$,
\begin{equation}\label{sil}
D_2 \exp(-\underline r)\geq 1-|x|\geq D_2^{-1}\exp(-\underline r).
\end{equation}
Denote an hyperbolic geodesic ball of radius $s$ around a point $p$ by $\bar B_s(p)$. Consider $\bar B_{\hat s}(p)\subset M\setminus K$ such that
$$F(\bar B_{\hat s}(p))=B_{R}(\vec a).$$ Inequalities \eqref{silence} and \eqref{sil} combined with  standard facts in hyperbolic geometry imply the existence of some constant $D_3=D_3(D_1,D_2,r_1)$ for which
\begin{equation}\label{prox}
	\sup_{\Sigma}|\mbox{dist}_{\H^3}(x,p)-\hat s|\leq D_3\exp(-\underline r).
\end{equation}

Define $\gamma$ to be an isometry of $\H^3$ for which $\gamma(\bar B_{\hat s}(0))=\bar B_{\hat s}(p)$. The equation above and the fact that $\bar r-\underline r\leq C_4$ implies that, after choosing $r_0$ large enough, we have for all $\underline r\geq r_0$
$$\{x\in \Sigma\,|\,\mbox{dist}_{\H^3}(x,p)=\hat s\}\subset \bar B_{\overline r+D_4}(0)\setminus \bar B_{\underline r-D_4}(0)$$
 for some $D_4=D_4(C_4,D_3, r_1)$ and thus
$$\mbox{dist}(\gamma,\mbox{Id})\leq C_0,$$ 
where $C_0=C_0(C_1,C_2, C_3,C_4, r_1)$. 

From \eqref{prox} we know that, with respect to the coordinate system induced by $\gamma$, 
$$\sup_{\Sigma}|s(x)-\hat s|=O(\exp(-\underline r))$$
and so we can apply Proposition \ref{estimates} (i) to conclude 
$$\int_{\Sigma}\frac{4}{\exp(2\hat s)}d\mu=4\pi+O(\exp(-\underline r))\implies |\Sigma|=4\pi\sinh^2 \hat s+O(\exp(-\underline r)).$$ This implies that the function $u$ defined on the statement of Theorem \ref{aprox} satisfies $|u|=O(\exp(-\underline r))$ and that
$$2+\frac{4\pi}{|\Sigma|}=\frac{2\cosh \hat r}{\sinh \hat r}+O(\exp(-3\underline r))=\frac{2\cosh  s(x)}{\sinh  s(x)}+O(\exp(-3\underline r))$$
for all $x$ in $\Sigma$. As a result, we have from Lemma \ref{ads} (i) and Lemma \ref{h} that
$$H(s(x))=H+O(\exp(-3\underline r))\quad\mbox{for every } x\in \Sigma.$$
We can now integrate the second identity in Proposition \ref{laplace} against the function $u$ and use Proposition \ref{estimates} in order to obtain
 $$\int_{\Sigma}|\ts|^2d\mu\leq O(\exp(-2\underline r)).$$
\end{proof}

\section{Unique approximation to coordinate spheres}\label{Unique}

The purpose of this section is to show that the constant mean curvature stable sphere $\Sigma$ must be very close to the coordinate spheres induced by our fixed coordinate system, i.e., the one satisfying condition \eqref{centered}.
Such result is obviously false in hyperbolic space and so we need to use the fact that the ambient manifold is an asymptotically hyperbolic
space with $\tr h>0$. Like in \cite{neves}, this will be accomplished using  the Kazdan-Warner identity. More precisely, we show

\begin{thm}\label{sil3} There are constants $r_0$ and $C$ depending only on $C_1,C_2, C_3,C_4,$ and $r_1$ for which, if $\underline r\geq r_0$, the  function on $\Sigma$ given by
$$w(x)=r(x)-\hat r\quad\mbox{where}\quad|\Sigma|=4\pi\sinh^2 \hat r$$
satisfies
$$\sup_{\Sigma} |w|\leq C\exp(-\underline r)\quad\mbox{and}\quad\int_{\Sigma}| \trr|^2d\mu\leq C\exp(-2\underline r).$$

Moreover, $\Sigma$ can be written as $$\Sigma=\{(\hat r+f(\theta),\theta)\,|\, \theta\in S^2\}\quad\mbox{with}\quad |f|_{C^2(S^2)}\leq C.$$
\end{thm}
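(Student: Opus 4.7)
The plan is to combine Theorem \ref{Aprox} with the Kazdan--Warner identity and the uniqueness from \cite{wang} of the coordinate system \eqref{centered}. Theorem \ref{Aprox} already provides some isometry $\gamma$ with $\dist(\gamma,\mathrm{Id})\leq C_0$ for which the desired estimates hold in the $s$-coordinates. The heart of the proof will be to show that $\gamma$ must, modulo a rotation of $\H^3$ fixing the origin, be within $O(\exp(-\underline r))$ of the identity; since such rotations preserve $r$, this closeness permits the replacement of $s$ by $r$ in the estimates at the cost of an $O(\exp(-\underline r))$ error.

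By Theorem \ref{intrinsic}, on $S^2$ we have $\hat g=\exp(2\beta)g_0$ with $\|\beta\|_{C^0}=O(\exp(-\underline r))$, $\|\nabla_0\beta\|_{L^2}=O(\exp(-\underline r))$, and $\int x_j\exp(2\beta)d\mu_0=0$. Applied to the Gauss curvature $\widehat K=(|\Sigma|/4\pi)K$ of $\hat g$, the Kazdan--Warner identity reads
$$\int_{S^2}\langle\nabla_0 x_j,\nabla_0\widehat K\rangle\exp(2\beta)\,d\mu_0=0,$$
which, after integration by parts and using $\Delta_0 x_j=-2x_j$, is equivalent to
$$\int_{S^2}\widehat K\, x_j\exp(2\beta)\,d\mu_0=\int_{S^2}\widehat K\exp(2\beta)\langle\nabla_0 x_j,\nabla_0\beta\rangle\,d\mu_0.$$
Expanding $\exp(2\beta)=1+O(\exp(-\underline r))$ in the centering condition yields $\int x_j\beta\, d\mu_0=O(\exp(-2\underline r))$, whence the right-hand side above is $O(\exp(-2\underline r))$ after using $\|\widehat K-1\|_{L^2}=O(\exp(-\underline r))$ and Cauchy--Schwarz on the remaining error. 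Substituting the expansion of $K$ from Lemma \ref{ads}(v) together with the estimates of Lemma \ref{h}, Theorem \ref{2ff}, and Proposition \ref{estimates} for the lower-order terms, the left-hand side becomes $[1/(2\sinh\hat r)]\int\tr h^\gamma\, x_j\exp(2\beta)\,d\mu_0+O(\exp(-2\underline r))$ (the $\theta$-independent part of $\widehat K$ integrates to zero against $x_j\exp(2\beta)$ by centering). Multiplying through by $2\sinh\hat r\sim\exp(\underline r)$ and absorbing $\exp(2\beta)-1$ into the error, we conclude
$$\int_{S^2}x_j\tr h^\gamma\,d\mu_0=O(\exp(-\underline r)).$$

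By the analysis in \cite[p.~292]{wang}, the positivity hypothesis $\tr h\geq C_3$ guarantees that the map sending a conformal diffeomorphism of $S^2$ (modulo rotations) to the triple $(\int x_j\tr h^\gamma\,d\mu_0)_{j=1,2,3}\in\R^3$ is a local diffeomorphism at the identity with invertible derivative. Since Theorem \ref{Aprox} confines $\gamma$ to the compact region $\dist(\gamma,\mathrm{Id})\leq C_0$ on which the uniqueness from \cite{wang} holds and the relevant derivative bounds are uniform, local invertibility together with the smallness obtained above yields, after composing $\gamma$ with a rotation about the origin (which leaves $r$ unchanged), $\dist(\gamma,\mathrm{Id})\leq C\exp(-\underline r)$. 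Standard hyperbolic geometry then gives $\sup_\Sigma|r-s|+\sup_\Sigma|\trr-\ts|\leq C\exp(-\underline r)$, so combining with Theorem \ref{Aprox} produces the estimates $\sup_\Sigma|w|\leq C\exp(-\underline r)$ and $\int_\Sigma|\trr|^2 d\mu\leq C\exp(-2\underline r)$.

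For the graph representation, note that $\Sigma$ is contained in the shell $\{|r-\hat r|\leq C\exp(-\underline r)\}$, has $|\mathring A|^2\leq C\exp(-4\underline r)$ pointwise by Theorem \ref{2ff} and $H=2+O(\exp(-2\underline r))$ by Lemma \ref{h}, so in particular $|A|$ is uniformly bounded. These curvature bounds together with $C^0$-proximity to the coordinate sphere $\{r=\hat r\}$, whose intrinsic scale $\sinh\hat r$ is of order $\exp(\underline r)$, are more than enough---by standard elliptic arguments for the prescribed mean curvature graph equation---to force $\langle\partial_r,\nu\rangle$ to be bounded away from zero and hence $\Sigma$ to be a $C^2$-graph over $\{r=\hat r\}$ with uniform bound on $|f|_{C^2(S^2)}$. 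The main obstacle is the careful bookkeeping in the Kazdan--Warner step: one has to use the $\beta$-centering to kill the constant contribution of $\widehat K$, exploit that the next correction carries the prefactor $(2\sinh\hat r)^{-1}$ of the right weight to convert the $O(\exp(-2\underline r))$ bound on the right-hand side into the needed $O(\exp(-\underline r))$ bound on $\int x_j\tr h^\gamma\,d\mu_0$, and verify that the remaining error terms (notably the $|\ts|^2\,O(\exp(-3r))$ contribution, handled using $|\ts|\leq 1$ pointwise and Proposition \ref{estimates}) are genuinely smaller than that leading estimate.
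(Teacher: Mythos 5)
Your Kazdan--Warner argument matches the paper's closely: both derive $\int_{S^2} x_j\,\tr h^\gamma\,d\mu_0 = O(\exp(-\underline r))$ by the same mechanism, and the bookkeeping you outline (centering kills the constant part of $\widehat K$, the weight $(2\sinh\hat r)^{-1}$ upgrades the error from $O(\exp(-2\underline r))$ to $O(\exp(-\underline r))$, the $|\partial_s^\top|^2$ correction is lower order) is correct. Where you then pass from this integral estimate to $\dist(\gamma,\mathrm{Id}) \leq C\exp(-\underline r)$, you invoke a soft local-invertibility-plus-compactness argument based on Wang's analysis; the paper instead writes down the explicit one-parameter conformal family $\gamma^{-1}(x)$ (after reducing to this case by a rotation), computes $\int_{S^2} x_3\,\tr h^\gamma\,d\mu_0 = \sinh t \int_{S^2}\tr h\,d\mu_0$ exactly, and reads off $|t| = O(\exp(-\underline r))$ from $\tr h \geq C_3 > 0$. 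Your route works and is arguably more conceptual, but the paper's explicit identity is what makes the role of the positivity hypothesis $\tr h > 0$ completely transparent.

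The one genuine gap is in the graph representation. You assert that the $C^0$-proximity of $\Sigma$ to $\{r=\hat r\}$ together with the pointwise curvature bounds on $|\mathring A|$ and $H$ are ``more than enough --- by standard elliptic arguments for the prescribed mean curvature graph equation --- to force $\langle\partial_r,\nu\rangle$ bounded away from zero.'' This is circular as stated: one cannot invoke the graph PDE before knowing $\Sigma$ is a graph, and showing $\langle\nu,\partial_r\rangle > 0$ pointwise is precisely the step that requires work, because what you have from Theorem \ref{Aprox} (and your transfer from $s$ to $r$) is only an $L^2$ bound $\int_\Sigma |\partial_r^\top|^2\,d\mu \leq C\exp(-2\underline r)$. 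The paper closes this gap concretely: it shows, using Proposition \ref{laplace} and the Bochner formula for $|\nabla r|^2$, that $\Delta|\partial_r^\top|^2 \geq -C|\partial_r^\top|^2 - C\exp(-3r)$, then runs Moser iteration (as in the proof of Theorem \ref{2ff}) to upgrade the $L^2$ bound to the pointwise estimate $\sup_\Sigma|\partial_r^\top|^2 \leq C\exp(-2\underline r)$. That pointwise smallness immediately gives $\langle\nu,\partial_r\rangle = \sqrt{1-|\partial_r^\top|^2} > 0$, hence the graph property, and only then does the paper cite elliptic estimates (Proposition 4.1 of \cite{neves2}) for the $C^2$ bound on $f$. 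You should replace your ``standard elliptic arguments'' with this $L^2\!\to L^\infty$ upgrade.
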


\begin{proof}
Denote by $ \widehat K$ the Gaussian curvature of $\hat g$, the metric defined on  Theorem \ref{intrinsic}.
 From the Kazdan-Warner identity \cite{Kazdan} we know that, for each $i=1,2,3,$
 $$\int_{S^2}\langle\nabla \widehat{K} ,\nabla x_i\rangle\exp(2\beta )d{\mu_0}=0$$
 or, equivalently,
 $$\int_{S^2}x_i\widehat{K} \exp(2\beta )d{\mu_0}-\int_{S^2}\widehat K \langle\nabla \beta ,\nabla x_i
 \rangle\exp(2\beta )d{\mu_0}=0.$$
 Theorem \ref{intrinsic} and the fact that
 $$\widehat K =1+O(\exp( -\underline{r} ))$$ 
 implies that
\begin{multline*}
\int_{S^2}\widehat K \langle\nabla \beta ,\nabla x_i
 \rangle\exp(2\beta )d{\mu_0}=\int_{S^2}\langle\nabla \beta ,\nabla x_i\rangle d{\mu_0}+O(\exp(-2\underline r))\\
 =2\int_{S^2}\beta x_id{\mu_0}+O(\exp(-2\underline{r} ))=O(\exp(-2\underline r))
\end{multline*}
and so, the Kazdan-Warner identity becomes
\begin{equation}\label{brubaker}
\int_{S^2} x_i{ \widehat K}\exp(2\beta) d{\mu_0}=O(\exp( -2\underline{r} ))\quad\mbox{for }i=1,2,3.
\end{equation}

From Lemma \ref{ads} (v) and Theorem \ref{2ff} we know that
    \begin{multline*}
     4\pi\widehat{K}   = \a \frac{H^2-4}{4}+|\Sigma|\frac{\tr h^{\gamma} }{2\sinh^3 s}
     - |\ts|^2O(\exp(-\underline r))+O(\exp(-2\underline r))\\
    \end{multline*}
and hence, because $\Sigma $ has constant mean curvature, we obtain from \eqref{brubaker}, Theorem \ref{intrinsic}, and Theorem \ref{Aprox} that
\begin{multline*}
        \int_{S^2}x_i\,\frac{\a ^{3/2}}{\sinh^3 s}\tr h^{\gamma} \exp(2\beta)\,d{\mu_0} = \fint_{\Sigma }\ds^2 O(1)d\mu_0+O(\exp(-\underline r))\\
        =O(\exp(-\underline r)).
    \end{multline*}
On the other hand, from Theorem \ref{intrinsic}, and Theorem \ref{Aprox}, 
$$\frac{\sinh^3 \hat r}{\sinh^3 s}=1+O(\exp(-\underline r))\quad\mbox{and}\quad\exp(2\beta)=1+O(\exp(-\underline r))$$
which implies that
$$\int_{S^2}x_i\,\tr h^{\gamma}\,d{\mu_0}=O(\exp(-\underline r))\quad\mbox{for }i=1,2,3.$$    

We can assume without loss of generality that the isometry $\gamma$ induces the following conformal transformation of $S^2$ (see \cite[page 292]{wang})
$$\gamma^{-1}(x)=\left(\frac{x_1}{\cosh t+x_3\sinh t},\frac{x_2}{\cosh t+x_3 \sinh t},\frac{\sinh t+x_3\cosh t}{\cosh t+x_3\sinh t}\right),$$
where $t$ is the parameter we want to estimate. In this case a direct computation reveals that
$$\left(\gamma^{-1}\right)^* g_0=\exp(2u)g_0\quad\mbox{with}\quad \exp(u)=(\cosh t+x_3 \sinh t)^{-1}.$$
According to Section \ref{sec2} we have that
$$ \tr h^{\gamma}=\exp(-3u\circ\gamma)\tr h\circ \gamma$$
and so, due to 

\begin{equation*}
 \int_{S^2}x_3\tr h d\mu_0=0,
\end{equation*}
we obtain
\begin{multline*}
\int_{S^2}x_3\,\tr h^{\gamma}\,d{\mu_0}=\int_{S^2}x_3\,\exp(-3u\circ\gamma)\tr h\circ \gamma\,d{\mu_0}\\
=\int_{S^2}x_3\circ \gamma^{-1}\exp(-u)\tr hd{\mu_0}=\sinh t \int_{S^2}\tr hd{\mu_0}.
\end{multline*}
As a result, the parameter $t$ has order $\exp(-\underline r)$ and this implies that
$$\mbox{dist}(\gamma,\mbox{Id})\leq C\exp(-\underline r)$$ 
for some $C=C(C_1,C_2, C_3,C_4, r_1).$
An immediate consequence is that
$$\sup_{\Sigma} |w|\leq C\exp(-\underline r)$$
for some $C=C(C_1,C_2, C_3,C_4, r_1).$
Arguing like in the proof of Theorem \ref{Aprox}, it is straightforward to see that
$$\int_{\Sigma}| \trr|^2d\mu= O(\exp(-2\underline r)).$$

Decompose the normal vector $\nu$ as $\nu=a\partial_r+\bar \nu$, where $\bar \nu$ is orthogonal to $\partial_r$ with respect to the hyperbolic metric. In this case,
$$1=a^2+|\bar \nu|^2+O(\exp(-4\underline r))\implies |\bar \nu|\leq |\trr|+O(\exp(-2\underline r)).$$
Thus, assuming normal coordinates at a given point, we have from Theorem \ref{2ff}
\begin{align*}
\lvert \nabla_{\partial_i} \langle \nu, \partial _r \rangle \rvert =& |A(\partial_i,\trr)+\langle\nu,\partial_r\rangle\langle D_{\partial_i},\partial_r\rangle+\langle D_{\partial_i}\partial_r,\bar \nu\rangle|\\
\leq & C|\trr|+C\exp(-2\underline r),
\end{align*}
where $C=C(C_1,C_2, C_3,C_4, r_1)$.
Combining Proposition \ref{laplace}  with the Bochner formula for $\lvert\nabla r\rvert^2$ we obtain that, assuming  $\underline r\geq r_0$ for some $r_0$ chosen sufficiently large,
     \begin{align*}
        \Delta  \dr^2 & \geq -C\dr^2+2\lvert\nabla\nabla r\rvert^2-C\lvert\nabla\nabla r\rvert \dr^2\exp(-2r)-C\exp(-3r)\\
        & \geq -C\dr^2-C\exp(-3r),
    \end{align*}
    where $C=C(C_1,C_2, C_3,C_4, r_1)$. In view of this equation, we can apply  Moser's iteration (like we did in the proof of Theorem \ref{2ff}) in order to conclude that
           $$\sup_{\Sigma }\dr^2\leq C\int_{\Sigma }\dr^2d\mu+C\exp(-3\underline r)\leq C\exp(-2\underline r),$$
    where $C=C(C_1,C_2, C_3,C_4, r_1).$

    Therefore, provided we choose $r_0$ sufficiently large, we have that $ \langle \nu, \partial _r \rangle$ is positive whenever $\underline r \geq r_0$ and this implies that $\Sigma$ is the graph of a function $f$ over the coordinate sphere $\{|x|=\hat r\}.$  Because  
    $$\n^2=O(\exp(-4\underline r), \quad |H^2-4|=O(\exp(-2\underline r)), \quad |w|=O(\exp(-\underline r)),$$
    and  $$|\trr|=O(\exp(-2\underline r)),$$ a simple computation (see Proposition 4.1 in \cite{neves2}) shows that  $$|f|_{C^2(S^2)}\leq C,$$
    where $C=C(C_1,C_2, C_3,C_4, r_1).$
    \end{proof}

\section{Existence and uniqueness of constant mean curvature foliations}\label{U}

In this section we show existence and uniqueness of foliations by stable spheres with constant mean curvature. In \cite{neves} this was accomplished via perturbing coordinate spheres so that they become constant mean curvature spheres. Unfortunately, this  cannot be immediately applied to our new setting because the quadratic terms do not seem to have the necessary decay.  

Nonetheless, the  {\em apriori} estimates derived in Lemma \ref{h}, Theorem \ref{2ff}, and Theorem \ref{sil3}, will allow us to continuously deform a constant mean curvature sphere in Anti-de Sitter-Schwarzschild space into a constant mean curvature sphere in our asymptotically hyperbolic metric. This method was used by Metzger in \cite{metzger} and we will adapt it to our setting.

We start with some estimates regarding the {\em normalized} Jacobi operator
\begin{equation*}\label{J} 
Lf =-\widehat \Delta f -|\Sigma|(4\pi)^{-1}\left(\lvert A\rvert^2+R(\nu,\nu) \right)f,
\end{equation*}
where the Laplacian is computed with respect to the normalized metric $\hat g$ defined in Theorem \ref{intrinsic}. The volume form with respect to this metric will be denoted by $d\hat \mu$.

\begin{prop}\label{invert}
Let $\phi$ be a solution to $L\phi=\alpha$, where $\alpha$ is a constant. There is $r_0=r_0(C_1,C_2, C_3,C_4, r_1)$ and $C=C(C_1,C_2, C_3,C_4, r_1)$ so that if 
$\underline r\geq r_0$, then
$$\fint_{\Sigma}(\phi-\bar \phi)^2d\hat \mu\leq C\exp(\underline r)\bar\phi\left(-\alpha-2\bar \phi+\bar \phi C\exp(-2\underline r)+\bar\phi\fint_{\Sigma}\frac{3\tr h}{2\sinh\hat r}d\bar \mu \right),$$
where $\bar \phi$ denotes the average (computed with respect to $\hat g$) of $\phi$.

Moreover, the operator $L$ is invertible and positive definite when restricted to functions with average zero.
\end{prop}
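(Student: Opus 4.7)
The plan is to test the equation $L\phi=\alpha$ against two specific functions and obtain an exact identity for the quadratic form
$$Q(\psi):=\int_{\Sigma}\lvert\widehat{\nabla}\psi\rvert^2\,d\hat\mu-\int_{\Sigma}V\psi^2\,d\hat\mu,\qquad V:=\frac{|\Sigma|}{4\pi}\bigl(|A|^2+Rc(\nu,\nu)\bigr),$$
and then to combine it with a coercivity estimate. Writing $\phi=\bar\phi+\psi$ with $\fint_\Sigma\psi\,d\hat\mu=0$, integration by parts after testing $L\phi=\alpha$ against $\psi$ yields $Q(\psi)=\bar\phi\int V\psi\,d\hat\mu$, while testing against the constant function $1$ yields $\int V\phi\,d\hat\mu=-4\pi\alpha$. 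Eliminating $\int V\psi\,d\hat\mu$ produces the exact identity
$$Q(\psi)=-4\pi\alpha\bar\phi-\bar\phi^2\int_\Sigma V\,d\hat\mu.$$

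The second step is to compute $\int_\Sigma V\,d\hat\mu$ up to $O(\exp(-2\underline r))$ and extract the $\tr h$ contribution. Using the Gauss equation
$$|A|^2+Rc(\nu,\nu)=\tfrac{3H^2}{4}+\tfrac{|\mathring A|^2}{2}+\tfrac{R}{2}-K,$$
I would integrate and apply Gauss-Bonnet, Lemma \ref{ads}(iii), Theorem \ref{2ff}, together with a sharpening of Lemma \ref{h} obtained by integrating Lemma \ref{ads}(v), which produces
$$H^2=4+\frac{16\pi}{|\Sigma|}-\frac{2}{|\Sigma|}\int_\Sigma\frac{\tr h}{\sinh^3 r}\,d\mu+O(\exp(-4\underline r)).$$
The leading $3|\Sigma|$ contributions from $\tfrac{3H^2}{4}$ and $\tfrac{R}{2}$ cancel, and converting the remaining $\tr h$ term to an average against $\bar\mu$ gives
$$\int_\Sigma V\,d\hat\mu=8\pi-4\pi\fint_\Sigma\frac{3\tr h}{2\sinh\hat r}\,d\bar\mu+O(\exp(-2\underline r)).$$
Substituting back, the identity becomes
$$Q(\psi)=4\pi\bar\phi\Bigl(-\alpha-2\bar\phi+\bar\phi\fint_\Sigma\frac{3\tr h}{2\sinh\hat r}\,d\bar\mu\Bigr)+\bar\phi^2\,O(\exp(-2\underline r)).$$

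The main obstacle is establishing the coercivity $Q(\psi)\ge c\exp(-\underline r)\int_\Sigma\psi^2\,d\hat\mu$ for every $\psi$ with $\fint_\Sigma\psi\,d\hat\mu=0$. Here I would use the analogous pointwise expansion
$$V=2-\frac{\tr h}{2\sinh\hat r}-\fint_\Sigma\frac{\tr h}{\sinh\hat r}\,d\hat\mu+(\text{small})$$
to split
$$Q(\psi)=\Bigl(\int\lvert\widehat{\nabla}\psi\rvert^2\,d\hat\mu-2\int\psi^2\,d\hat\mu\Bigr)+\int\Bigl(\frac{\tr h}{2\sinh\hat r}+\fint\frac{\tr h}{\sinh\hat r}\,d\hat\mu\Bigr)\psi^2\,d\hat\mu+\text{error}.$$
By Theorem \ref{intrinsic}, $\hat g=e^{2\beta}g_0$ with $\beta=O(\exp(-\underline r))$, so conformal invariance of the Dirichlet integral in dimension two together with the Lichnerowicz/Poincar\'e inequality on $(S^2,g_0)$ (first non-trivial eigenvalue equal to $2$) renders the first bracket nonnegative up to $O(\exp(-\underline r))\int\psi^2\,d\hat\mu$, with a surplus controlling the projection onto spherical harmonics of degree $\ge 2$. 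The hypothesis $\tr h\ge C_3>0$ makes the second bracket bounded below by $(3C_3/(2\sinh\hat r))\int\psi^2\,d\hat\mu\ge c\exp(-\underline r)\int\psi^2\,d\hat\mu$, and the remaining error is absorbed for $\underline r$ large (its $|\partial_r^\top|^2$ contribution being controlled by $\int_\Sigma|\partial_r^\top|^2\,d\mu\le C\exp(-2\underline r)$ from Theorem \ref{sil3}). Combining the coercivity with the previous identity and dividing by $4\pi$ produces the stated inequality. Invertibility and positive-definiteness of $L$ on the zero-mean subspace follow immediately: setting $\alpha=0$ and $\bar\phi=0$ in the inequality forces $\psi\equiv 0$, so $L$ has trivial kernel there and by Fredholm theory is invertible, while positive-definiteness follows directly from $\int L\phi\cdot\phi\,d\hat\mu=Q(\phi)\ge c\exp(-\underline r)\int\phi^2\,d\hat\mu$.
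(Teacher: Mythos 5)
Your computation of the exact integral identity
$$Q(\psi)=-4\pi\alpha\bar\phi-\bar\phi^{2}\int_{\Sigma}V\,d\hat\mu,$$
and the evaluation $\int_{\Sigma}V\,d\hat\mu=8\pi-4\pi\fint_{\Sigma}\tfrac{3\operatorname{tr}_{g_0}h}{2\sinh\hat r}\,d\hat\mu+O(\exp(-2\underline r))$ are correct and agree (up to normalization) with what the paper obtains by multiplying $Lu=\alpha+V\bar\phi$ by $u$, integrating by parts, and separately integrating the equation itself. Testing against $1$ and $\psi$ and eliminating $\int V\psi\,d\hat\mu$ is a slightly cleaner bookkeeping, but the content is the same. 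The gap is in the coercivity step, which you yourself flag as the main obstacle but do not close.

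In your split
$$Q(\psi)=\Bigl(\int|\widehat\nabla\psi|^{2}\,d\hat\mu-2\int\psi^{2}\,d\hat\mu\Bigr)+\int\Bigl(\tfrac{\operatorname{tr}_{g_0}h}{2\sinh\hat r}+\fint\tfrac{\operatorname{tr}_{g_0}h}{\sinh\hat r}\Bigr)\psi^{2}\,d\hat\mu+\text{error},$$
the conformal comparison $\hat g=e^{2\beta}g_0$ makes $\int|\widehat\nabla\psi|^2\,d\hat\mu=\int|\nabla_0\psi|^2\,d\mu_0$ exactly, but $\int\psi^2\,d\hat\mu=\int\psi^2 e^{2\beta}\,d\mu_0$ differs from $\int\psi^2\,d\mu_0$ by $\int\psi^2(e^{2\beta}-1)\,d\mu_0$, which is of size $O(\exp(-\underline r))\int\psi^2\,d\hat\mu$ with a constant controlled only by Theorem \ref{intrinsic} (so depending on $C_1,\dots,r_1$, not on $C_3$). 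Your Lichnerowicz/Poincar\'e ``surplus'' only helps on the projection of $\psi$ onto spherical harmonics of degree $\geq 2$: for the degree-$1$ component $\psi_1$, the first bracket contributes no surplus and one picks up a negative term of order $\exp(-\underline r)\int\psi_1^2\,d\mu_0$ whose constant is $\sup|e^{2\beta}-1|$, essentially the degree-$2$ part of $\beta$, which the conformal Gauss equation shows is driven by the degree-$2$ part of $\tfrac{\operatorname{tr}_{g_0}h}{2\sinh\hat r}$ and is genuinely of size $\exp(-\underline r)$. This competes at the \emph{same order} $\exp(-\underline r)$ as the positive contribution $\tfrac{C_3}{2\sinh\hat r}\int\psi^2\,d\hat\mu$ from the second bracket, and there is no reason the coercivity constant (coming from $C_3$) dominates the error constant (coming from $\beta$). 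Choosing $r_0$ large does not help because both sides scale identically.

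The paper avoids the comparison with $g_0$ entirely at this stage. It applies the Lichnerowicz lower bound for the first nonzero eigenvalue of the Laplacian of $(\Sigma,\hat g)$ itself, namely $\fint|\widehat\nabla u|^{2}\,d\hat\mu\geq 2\inf\widehat K\,\fint u^{2}\,d\hat\mu$, and combines it with the pointwise estimate $\widehat K-\inf\widehat K\leq\tfrac{\operatorname{tr}_{g_0}h}{2\sinh\hat r}+C\exp(-2\underline r)$ (which uses $\operatorname{tr}_{g_0}h>0$ to bound $\inf\widehat K$ from below by the part of $\widehat K$ not involving the $\operatorname{tr}_{g_0}h$ term). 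The coefficient $\tfrac{3\operatorname{tr}_{g_0}h}{2\sinh\hat r}$ in the potential then cancels against $2(\widehat K-\inf\widehat K)$ down to $\tfrac{\operatorname{tr}_{g_0}h}{2\sinh\hat r}\geq c\exp(-\underline r)$, with the remaining errors at $O(\exp(-2\underline r))$, one order smaller than the coercivity, so they can be absorbed by taking $r_0$ large. Your argument would need an analogous cancellation at order $\exp(-\underline r)$ between the $e^{2\beta}-1$ error on the degree-$1$ modes and the potential term, and that cancellation is not established. I would recommend replacing the conformal comparison by the curvature-based eigenvalue bound on $(\Sigma,\hat g)$ as the paper does.

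A minor additional point: your invertibility argument shows only that $L$ has trivial kernel on the zero-average subspace; to get invertibility of $L$ as an operator (needed for the inverse function theorem in Section \ref{U}) one should show, as the paper does, that $L\phi=0$ forces $\phi=0$ without assuming $\bar\phi=0$ in advance, which is what the first assertion of the proposition is used for.
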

\begin{proof}
From Gauss equation, Lemma \ref{ads}, and Theorem \ref{sil3}, we have that
\begin{align*}
	|A|^2+Rc(\nu,\nu) & = 2K-R+3Rc(\nu,\nu)+3\n^2\\
	&= 2K-\frac{3\tr h}{2\sinh^ 3r}+O(\exp(-4\underline r))\\
	&= 2K-\frac{3\tr h}{2\sinh^ 3\hat r}+O(\exp(-4\underline r))
\end{align*}
and so
$$|\Sigma|(4\pi)^{-1}\left(\lvert A\rvert^2+R(\nu,\nu) \right)=2\widehat K-\frac{3\tr h}{2\sinh\hat r}+O(\exp(-2\underline r)).$$
Set $u=\phi-\bar \phi$. Then
$$Lu=\left(2\widehat K-\frac{3\tr h}{2\sinh\hat r}+O(\exp(-2\underline r))\right)\bar \phi+\alpha,$$
and thus, from integration by parts,
\begin{multline*}
\fint_{\Sigma}|\widehat \nabla u|^2d\hat\mu+\fint_{\Sigma}\left(\frac{3\tr h}{2\sinh\hat r}-2\widehat K+O(\exp(-2\underline r))\right)u^2d\hat\mu\\
=\bar \phi \fint_{\Sigma} \,u\left(2\widehat K-\frac{3\tr h}{2\sinh\hat r}+O(\exp(-2\underline r))\right) d\hat \mu.\\
\end{multline*}

Using the well know fact that that the lowest eigenvalue of $S^2$ is bounded below by $2\inf \widehat K$, we obtain that
\begin{multline*}
	\fint_{\Sigma}\left(\frac{3\tr h}{2\sinh\hat r} +2\inf\widehat K-2\widehat K+O(\exp(-2\underline r))\right)u^2d\hat\mu\\
	\leq \bar \phi \fint_{\Sigma} \,u\left(2\widehat K-\frac{3\tr h}{2\sinh\hat r}+O(\exp(-2\underline r))\right) d\hat \mu.\\
\end{multline*}
From Gauss equation, Lemma \ref{ads},  Theorem \ref{2ff},  Theorem \ref{sil3},  and the fact that $\tr h>0$, we know that
$$\widehat K-\inf \widehat K\leq  \frac{\tr h}{2\sinh\hat r}+C\exp(-2\underline r),$$
where $C=C(C_1,C_2, C_3,C_4, r_1)$.
Furthermore, integrating the equation satisfied by $u$, we obtain that
\begin{multline*}
 \fint_{\Sigma} \,u\left(2\widehat K-\frac{3\tr h}{2\sinh\hat r}+O(\exp(-2\underline r))\right) d\hat \mu\\
 =-\alpha-\bar \phi  \fint_{\Sigma} \left(2\widehat K-\frac{3\tr h}{2\sinh\hat r}+O(\exp(-2\underline r))\right) d\hat \mu\\
 =-\alpha-2\bar \phi+\bar\phi\fint_{\Sigma} \left(\frac{3\tr h}{2\sinh\hat r}+O(\exp(-2\underline r))\right) d\hat \mu.\\
 \end{multline*}
For this reason,
\begin{multline*}
	\fint_{\Sigma}\left(\frac{\tr h}{2\sinh\hat r} -C\exp(-2\underline r)\right)u^2d\hat\mu\\
	\leq \bar\phi\left(-\alpha-2\bar \phi+\bar \phi C\exp(-2\underline r)+\bar\phi\fint_{\Sigma}\frac{3\tr h}{2\sinh\hat r}d\hat \mu \right)
\end{multline*}
and so, if we choose $r_0$ sufficiently large, we can find $C=C(C_1,C_2, C_3,C_4,r_1)$ for which
$$\fint_{\Sigma}u^2d\hat\mu
	\leq C\exp(\underline r)\bar\phi\left(-\alpha-2\bar \phi+\bar \phi C\exp(-2\underline r)+\bar\phi\fint_{\Sigma}\frac{3\tr h}{2\sinh\hat r}d\hat \mu \right).
$$

What we have done so far also  shows that for  every function $f$ with average zero  we have,  provided we choose $r_0$ sufficiently large
$$\fint_{\Sigma} f Lf\,d\hat \mu\geq \fint_{\Sigma}\left(\frac{\tr h}{2\sinh\hat r} -C\exp(-2\underline r)\right)f^2d\hat\mu>0,$$ 
which shows the positive definiteness of $L$.

If $\phi$ is a solution of $L\phi=0$, then the first assertion of this proposition implies that

$$\fint_{\Sigma}(\phi-\bar\phi)^2d\hat\mu
	\leq C\exp(\underline r)\left(-2\bar \phi^2+\bar \phi^2 C\exp(-\underline r) \right),
$$
where $C=C(C_1,C_2, C_3,C_4, r_1)$. Thus, if we choose $r_0=r_0(C_1,C_2, C_3,C_4, r_1)$ sufficiently large and assume $\underline r\geq r_0$, then $\phi=\bar \phi$ and so $\phi=0$. Hence, $L$ is injective and consequently invertible.
\end{proof}

 We can now prove the main theorem of this paper. It is enough to show
 \begin{thm} Let $(M,g)$ be an asymptotically hyperbolic manifold satisfying hypothesis (H). Outside a compact set, $M$   admits a foliation by stable spheres with constant mean curvature. The foliation is unique among those with the property that, for some constant $C_4$, each leaf has
  $$\overline r-\underline r\leq C_4.$$
 \end{thm}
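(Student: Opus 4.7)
The strategy I would pursue is a continuity method in the spirit of Metzger \cite{metzger}, deforming from an Anti-de Sitter--Schwarzschild background. I would interpolate $g_\tau=(1-\tau)g_{AdS}+\tau g$ for $\tau\in[0,1]$, arranging both endpoints to satisfy (H) with a common set of constants, and for each target radius $\hat r$ sufficiently large track a stable CMC sphere $\Sigma_\tau$ with $|\Sigma_\tau|=4\pi\sinh^2\hat r$ (equivalently, prescribed mean curvature $H$ in a small interval above $2$, by Lemma \ref{h}) through the deformation. At $\tau=0$ a unique such sphere exists by the main result of \cite{neves}, so it is enough to prove that the set of $\tau\in[0,1]$ for which a stable CMC sphere of the prescribed size exists in the class $\overline r-\underline r\leq C_4$ is both open and closed.

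For openness I would apply the implicit function theorem. Writing a nearby surface as a normal graph $x\mapsto x+f(x)\nu(x)$ over $\Sigma_\tau$, the linearization of the CMC condition with prescribed area is precisely the normalized Jacobi operator $L$ of Proposition \ref{invert}, which is invertible and positive definite on functions of mean zero once $\underline r$ is large. Stability is preserved because the spectrum of $L$ varies continuously with $\tau$, and so an analytic family $\tau\mapsto\Sigma_\tau$ exists locally.

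For closedness, the apriori estimates proved in Sections \ref{prelim}--\ref{Unique} apply uniformly along the path since all constants depend only on $(C_1,C_2,C_3,C_4,r_1)$, which can be taken uniform in $\tau$. Theorem \ref{sil3} gives $\sup_{\Sigma_\tau}|w|\leq C\exp(-\underline r)$ together with a uniform $C^2(S^2)$ bound on the function $f_\tau$ that realizes $\Sigma_\tau$ as a graph over $\{|x|=\hat r\}$; combined with Theorem \ref{2ff} and standard elliptic regularity this yields a compactness statement ruling out degeneration, drift, or loss of regularity, so a subsequential limit $\Sigma_{\tau_\infty}$ is the desired stable CMC sphere for $g_{\tau_\infty}$. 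I expect the principal obstacle to be exactly this control on drift: ensuring that the leaves do not run off in the noncompact directions of the isometry group of $\H^3$. This is precisely the Kazdan--Warner argument underlying Theorem \ref{sil3}, and it is here that the positivity hypothesis $\tr h\geq C_3>0$ enters in an essential way. Once the family $\{\Sigma_1(\hat r)\}_{\hat r\geq r_0}$ is constructed, the foliation property follows because the normal speed $\partial_{\hat r}X\cdot\nu$ satisfies an equation of the form $L\phi=\text{const}$, and positivity of $L$ on mean-zero functions forces $\phi$ to have a definite sign.

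For uniqueness, let $\Sigma,\Sigma'$ be two stable CMC spheres in $(M,g)$ with the same mean curvature $H$, each satisfying $\overline r-\underline r\leq C_4$ and $\underline r\geq r_0$. Theorem \ref{sil3} realizes both as normal graphs of functions $f,f'$ over the same coordinate sphere $\{|x|=\hat r\}$ with $|f|_{C^2(S^2)}+|f'|_{C^2(S^2)}\leq C$. The difference $u=f-f'$ then solves $Lu=E(f,f',\nabla f,\nabla f')$, where $E$ is quadratic in its arguments and thus bounded by $C\exp(-3\underline r)\bigl(|u|+|\nabla u|+|\nabla^2 u|\bigr)$; Proposition \ref{invert} provides a lower bound for $L$ on mean-zero functions of order $\exp(-\underline r)\inf\tr h$, so for $r_0$ large this right-hand side can be absorbed and $u\equiv 0$. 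To promote uniqueness of individual leaves to uniqueness of the foliation, note that any foliation satisfying the hypothesis contains, for each sufficiently large $\hat r$, exactly one leaf of area $4\pi\sinh^2\hat r$, and the argument just given forces two such leaves in different foliations to coincide.
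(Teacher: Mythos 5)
Your existence argument follows the paper's continuity method closely: both interpolate from an Anti-de Sitter--Schwarzschild metric to $g$ through a family $g^t$ satisfying (H) with uniform constants, use Proposition \ref{invert} for openness, and the a priori estimates of Theorem \ref{sil3} together with elliptic regularity for closedness. One detail you should spell out: a small perturbation of a sphere satisfying $\overline r-\underline r\leq C_4$ could a priori leave that class, so openness requires a buffer. The paper introduces the nested hypotheses (A) and (B) --- showing via Lemma \ref{h} and Theorem \ref{sil3} that a leaf satisfying (A) with $H$ close to $2$ automatically satisfies the stronger (B) --- and applies the inverse function theorem only inside the (B) class, so that nearby solutions still satisfy (A). You should also note that the path of metrics must be arranged so that \emph{every} $g^t$, not just the endpoints, satisfies the centering condition \eqref{centered}; the paper's particular interpolation $(th+(1-t)g_0)$ ensures this automatically.

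Your uniqueness argument, by contrast, departs from the paper in a way that runs into a gap the paper explicitly flags. You write both competing spheres as graphs over $\{|x|=\hat r\}$, take the differenced equation $Lu=E(f,f',\nabla f,\nabla f')$, and claim $|E|\leq C\exp(-3\underline r)\bigl(|u|+|\nabla u|+|\nabla^2 u|\bigr)$. This decay rate is not supported by the estimates available: Theorem \ref{sil3} only gives $|f|_{C^2(S^2)}\leq C$, a \emph{bounded} (not small) $C^2$ norm, and the paper opens Section \ref{U} precisely by saying that the perturbation-of-coordinate-spheres method ``cannot be immediately applied to our new setting because the quadratic terms do not seem to have the necessary decay.'' Since the zero-order term of $L$ is only of size $\sim\tr h/\sinh\hat r\sim\exp(-\underline r)$ (Proposition \ref{invert}), a quadratic error that is merely $O(1)$ in derivatives of $f$ cannot be absorbed, and your stated $\exp(-3\underline r)$ bound is doing all the work without justification. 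The paper instead proves uniqueness by running the \emph{same continuity argument in reverse}: two distinct solutions for $g^1$ with the same $H$ would deform along $g^t$ into two distinct solutions for the AdS--Schwarzschild metric $g^0$ (never merging since the Jacobi operator is invertible for all $t$), contradicting the uniqueness theorem of \cite{neves}. This is the key structural idea your proposal is missing.

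Finally, your foliation argument --- that ``positivity of $L$ on mean-zero functions forces $\phi$ to have a definite sign'' --- is too weak as stated; positivity of $L$ alone does not give a sign for the solution of $L\phi=\mathrm{const}$. The paper needs the quantitative inequality of Proposition \ref{invert}, combined with the explicit computation \eqref{barro} of the constant $\frac{|\Sigma^l|}{4\pi}\frac{dH}{dl}$ in terms of $\bar\phi$, to deduce $\fint_\Sigma(\phi-\bar\phi)^2\,d\hat\mu\leq C\exp(-\underline r)\bar\phi^2$, and then elliptic estimates to upgrade this to $\sup|\phi-\bar\phi|\leq|\bar\phi|/2$, which is what actually forces $\phi$ to have a sign.
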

 \begin{proof}
 We first prove existence. Fix a coordinate system on $M\setminus K$ such that  condition \eqref{centered} is satisfied. The metric $g$ can be written as
 $$g=dr^2+\sinh^2 r g_0+h/(3\sinh r) + Q\quad\mbox{for} \quad{ r\geq r_1},$$
 where $Q$ satisfies the conditions of Definition \ref{defi}, and so we can consider  the family of asymptotically hyperbolic metrics $g_t$ given by
 $$g^t=dr^2+\sinh^2 r g_0+(th+(1-t)g_0)/(3\sinh r)+tQ+(1-t)P \quad\mbox{for} \quad{ r\geq r_1},$$
 where $P$ is such that $g^0$ is an Anti-de Sitter-Schwarzschild metric with positive mass.
 Note that condition \eqref{centered} is satisfied by all  metrics $g^t$ and that we can choose constants $C_1,C_2,$ and $C_3$ such that  hypothesis (H) is satisfied by  $g^t$ for all $0\leq t\leq 1$.
 
 Choose $r_0$ such that Theorem \ref{2ff}, Theorem \ref{sil3}, and Proposition \ref{invert} hold. We say that a constant mean curvature stable sphere $\Sigma$ satisfies hypothesis (A) or (B) if
 $$(A)\qquad \underline r\geq r_0\quad\mbox{and}\quad \overline r-\underline r\leq C_4,$$
 or
 $$(B)\qquad \underline r\geq 3r_0\quad\mbox{and}\quad \overline r-\underline r\leq C_4/3$$
 respectively.
 Note that due to Theorem \ref{sil3}, there is $D_1=D_1(C_1,C_2, C_3,C_4, r_1)$ such that $\underline r\geq \hat r-D_1$. Moreover, we see from Lemma \ref{h} that for all $\delta$ we can chose $\varepsilon_0=\varepsilon_0(C_1,C_2, C_3,C_4, r_1)$ such that
$$H\leq 2+\varepsilon_0\implies \hat r\geq \delta^{-1}.$$
As a result, there is $\varepsilon_0$ such that a sphere $\Sigma$ that satisfies hypothesis (A) and has $H\leq 2+\varepsilon_0$, also satisfies hypothesis (B).

Take $2<l<2+\varepsilon_0$ and set $$\mathcal S(l)=\{t\in [0,1]\,|\,\Sigma_t \mbox{ satisfies (A) and }H=l\mbox{ with respect to }g_t\}.$$
 This set is nonempty because contains $t=0$. If $t_0$ is in $\mathcal S(l)$,  we know from Proposition \ref{invert} that the linearization of the mean curvature is invertible  at $\Sigma_{t_0}$ with respect to the metric $g^{t_0}$. Thus, the inverse function theorem implies, for every $t$ sufficiently close to $t_0$, the existence of a  sphere $\Sigma_t$ with $H=l$, where the mean curvature is computed with respect to $g^t$. Another consequence of   Proposition \ref{invert} is that $\Sigma_{t_0}$ is strictly stable and this implies that, for all $t$ sufficiently close to  $t_0$, the spheres $\Sigma_t$ are also strictly stable. Because  $\Sigma_{t_0}$ satisfies hypothesis (B) with respect to $g^{t_0}$ we have that, for all $t$ sufficiently close to $t_0$,
 $$\overline r-\underline r\leq C_4/2\quad\mbox{and}\quad\underline r\geq 2r_0.$$
 This implies that $\mathcal S(l)$ is open. 
 
 Next we argue that $\mathcal S(l)$ is closed. Let $(t_i)_{i\in\N}$ be  a sequence in $\mathcal S$ converging to $\bar t$. From Theorem \ref{sil3}, each surface $\Sigma_i$ can be described as
 $$\Sigma_i=\{(\hat r_i+f_i(\theta),\theta)\,|\, \theta\in S^2\}\quad\mbox{with}\quad |f_i|_{C^2(S^2)}\leq C,$$
 where the sequence $(\hat r_i)$ must be bounded because the mean curvature is fixed. Each $f_i$  solves a quasilinear elliptic equation on $S^2$ and so standard theory implies a uniform $C^{2,\alpha}$--bound on each $f_i$. Therefore, after passing to a subsequence, $f_i$ converges to a function $f$ in $C^{2,\alpha}$ that satisfies the constant mean curvature equation with respect to $g^{\bar t}$ and such that its graph is a stable constant mean curvature sphere satisfying hypothesis (A). Thus, $\mathcal S(l)$ is closed and hence equal to $[0,1]$.
 
 Uniqueness is proven similarly. Suppose that the metric $g^{1}$ admits two distinct stable constant mean curvature spheres $\Sigma^1$ and $\Sigma^2$ satisfying hypothesis (A) and with $H(\Sigma_1)=H(\Sigma_2)$. Repeating the same arguments, we obtain the existence, for each $g^t$, of two families $\Sigma^1_t$ and $\Sigma^2_t$ of stable constant mean curvature spheres with equal mean curvature which can never coincide because, for all $0\leq t\leq 1$, the Jacobi operator is invertible (Proposition \ref{invert}). Therefore,  the Anti-de Sitter-Schwarzschild metric $g^0$ has two distinct stable constant mean curvature spheres satisfying hypothesis (A) with the same value for the mean curvature. This contradicts the uniqueness proven in \cite{neves}.  
 
 For each $2<l<2+\varepsilon_0$, denote by $\Sigma^l$ the unique stable sphere satisfying hypothesis (A)  with $H=l$ (with respect to the metric $g$). The fact that uniqueness holds combined with the fact that the Jacobi operator is invertible implies that $\Sigma_l$ constitutes a smooth family with respect to the parameter $l$. To check that it is indeed a foliation, we need to make sure that they never intersect.  We will show that is true for all $l$ sufficiently close to $2$.
     
     Denote by $V_l$ the deformation vector created by the family $(\Sigma^l)_{2<l<2+\varepsilon_0}$ and by $\phi$ its normal projection, i.e., $\phi=\langle V_l,\nu\rangle$. We want to show that $\phi$ does not change sign for all $l$ sufficiently close to 2. Gauss-Bonnet Theorem, Gauss equations, Lemma \ref{ads}, Theorem \ref{2ff}, and Theorem \ref{sil3}, imply that
     \begin{align*}
     		H^2(\Sigma^l) & =4+\frac{16\pi}{|\Sigma^l |}-2\fint_{\Sigma^l}\frac{\tr h}{\sinh^3 r} d\mu+O(\exp(-4\underline r))\\
						& =4+\frac{16\pi}{|\Sigma^l |}-2\left(\frac{4\pi}{|\Sigma^l|}\right)^{3/2}\fint_{S^2}\tr h d\mu_0+O(\exp(-4\underline r)).
     \end{align*}
Moreover, one can also check that
\begin{equation*}
	2H\frac{dH}{dl}=-\frac{d |\Sigma^l|}{dl}\frac{16\pi}{|\Sigma^l|^2}+3\frac{d |\Sigma^l|}{dl}\frac{(4\pi)^{3/2}}{|\Sigma^l|^{5/2}}\fint_{S^2}\tr h d\mu_0+O(\exp(-4\underline r))
\end{equation*}
and
$$\frac{d |\Sigma^l|}{dl}=H\int_{\Sigma^l} \phi d\mu=H|\Sigma^l|\bar\phi.$$
Therefore
\begin{equation}\label{barro}
	\frac{|\Sigma^l|}{4\pi}\frac{dH}{dl}=-2\bar\phi+\bar \phi\fint_{S^2}\frac{3\tr h}{2\sinh \hat r}d\mu_0+\bar\phi O(\exp(-2\underline r)).
\end{equation}
	On the other hand, we know that
	$$L\phi=\frac{|\Sigma^l|}{4\pi}\frac{dH}{dl},$$
	where the operator $L$ was defined in the beginning of this section. Thus, Proposition \ref{invert} implies that
	$$\fint_{\Sigma}(\phi-\bar \phi)^2d\hat \mu\leq C\exp(-\underline r)\bar\phi^2,$$
	where $C=C(C_1,C_2, C_3,C_4, r_1).$ Because $\phi$ solves  a linear elliptic equation, standard estimates imply that, for all  $\underline r$ sufficiently large,
	$$\sup |\phi-\bar \phi|\leq |\bar\phi |/2.$$
	Therefore, for all $l$ sufficiently close to $2$, $\phi$ does not change sign and this implies the theorem.
\end{proof}

\newpage

\bibliographystyle{amsbook}

\vspace{20mm}

\end{document}